\numberwithin{equation}{section}
\newcommand{\alt}[1]{\mathrm{Alt}(#1)}
\newtheorem{thm}{Theorem}[section]
\newtheorem{cor}[thm]{Corollary}
\newtheorem{lem}[thm]{Lemma}
\newtheorem{rem}[thm]{Remark}
\theoremstyle{definition}
\newcounter{claim}[thm]
\newcommand{\sym}[1]{\mathrm{Sym}(#1)}
\newcommand{\nor}[2]{\textbf{N}_{{#1}}({{#2}})}
\DeclareMathOperator{\bz}{bz}
\title[Base size on partitions]{On the base size of the symmetric and the alternating group acting on partitions}
\author{Joy Morris}
\address{Department of Mathematics and Computer Science\\
University of Lethbridge\\
Lethbridge, AB\\
T1K 3M4\\
Canada}
\thanks{The first author was supported by the Natural Science and Engineering Research Council of Canada (grant RGPIN-2017-04905).}
\email{joy.morris@uleth.ca}
\author{Pablo Spiga}
\address{Pablo Spiga\\
Dipartimento di Matematica e Applicazioni\\ University of Milano-Bicocca,\\\
Via Cozzi 55, 20125 \\Milano, Italy}
\email{pablo.spiga@unimib.it}
\begin{document}

\begin{abstract}
Given three positive integers $n,a,b$ with $n=ab$, we determine the base size of the symmetric group and of the alternating group of degree $n$ in their action on the set of partitions into $b$ parts having cardinality $a$.
\end{abstract}
\subjclass[2010]{primary 20B15, 20B30; secondary 05A18}
\keywords{primitive group, symmetric group, alternating group, partitions, base size}

\maketitle

\section{Introduction}
Given a permutation group $G$ on $\Omega$, a \textit{\textbf{base}} for $G$ is a subset of $\Omega$ whose pointwise stabilizer is the identity. The \textit{\textbf{base size}} $\mathrm{b}(G)$ of $G$ is the smallest cardinality of a base for $G$. Bases and in particular bases of small cardinality are of paramount importance in computational group theory, because they are at the heart of many algorithms for dealing with permutation groups.  Interesting combinatorial applications of bases are also discussed in~\cite{BaC}. In particular, bases with respect to the automorphism group of a combinatorial object were introduced in combinatorics as ``distinguishing sets" and considerable research was completed on this problem in ignorance of the work that had been done by group theorists.

Most interest in the base size of primitive groups originated from the classic results of Jordan~\cite{jordan}, bounding the cardinality of a primitive group via its base size. This interest was spurred in the '90s by the Cameron-Kantor conjecture~\cite{cameron}: there exists an absolute constant $b$ with $\mathrm{b}(G)\le b$, for every almost simple primitive group in a non-standard action. (We refer to~\cite{cameron} for the definition of \textit{\textbf{standard action}}.) This conjecture was settled in the positive in~\cite{B6}; however, the refinement of Cameron~\cite{cameron1}, asking whether one can take $b:=7$ has required considerable more effort. The detailed analysis of Burness~\cite{B1,B2,B3,B4} on fixed point ratios has resulted in an answer Cameron's question in~\cite{B5,B7,B8}. From these papers, one can infer that most almost simple primitive groups in non-standard actions have base size $2$.

At this point, there are three natural  problems: first, it is interesting to pin down exactly the base size of almost simple groups in all non-standard actions; second, determine the base size of arbitrary primitive groups; third, compute the base size of almost simple groups in standard actions. Much has been done on the first question in the work for solving Cameron's question. For the second problem, Fawcett~\cite{F1} has investigated the base size of primitive groups of diagonal type. In this paper, we are interested in the third problem and we are concerned with the action of the alternating group and of the symmetric group of degree $n=ab$ on the set of partitions having $b$ parts of cardinality $a$. This is one of the two standard actions of the almost simple groups with socle an alternating group that have not been determined. The other is the action on $k$-subsets, where the best results to date are the asymptotic results (that are precise if $n$ is sufficiently large relative to $k$) in~\cite{CGGM,halasi}.

The action on partitions has been considered a few times previously in the literature~\cite{BCN,B9,B10,james}, either for dealing directly with the problem of computing the base size or for studying related combinatorial invariants. In this paper, we finally explicitly determine the base size in all cases.

\begin{thm}\label{thrm:main}
Let $a$ and $b$ be positive integers with $a,b\ge 2$ and let $\bz(a,b)$ denote the base size of the symmetric group of degree $n=ab$ in its action on partitions into $b$ parts of cardinality $a$. Then
\begin{enumerate}
\item\label{eqthrm:main1} when $a=2$, $\bz(a,2)$ is undefined, $\bz(a,3)=4$ and $\bz(a,b)=3$ for every $b\ge 4$,
\item\label{eqthrm:main2} when $b=2$, $\bz(4,b)=5$  and $\bz(a,b)=\lceil\log_b(a+3)\rceil+1$ for every $a\notin\{2,4\}$, 
\item\label{eqthrm:main3} when $a\ge 3$ and $b\ge 3$, $\bz(a,b)=\lceil \log_b(a+2)\rceil+1$, unless $(a,b)\in \{(3,6),(3,7),(4,7),(7,3)\}$ or $b=a+2$,
\item\label{eqthrm:main4} $\bz(3,6)=\bz(3,7)=\bz(4,7)=3$, $\bz(7,3)=4$ and, for $a\ge 3$, $\bz(a,a+2)=3$.
\end{enumerate}
\end{thm}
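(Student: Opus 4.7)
\medskip
The proof of item (4) consists of establishing matching upper and lower bounds for each of the five asserted values. The upper bounds are constructive: for each listed $(a,b)$ one exhibits an explicit set of partitions of the prescribed size whose common pointwise stabilizer in $\sym{ab}$ is trivial. In the infinite family $b=a+2$, a natural construction arranges the $n=a(a+2)$ points on an $a\times(a+2)$ array and takes three partitions formed by shifted ``transversals'' of this array, verifying triviality of the common stabilizer by a short combinatorial check. For the four sporadic cases $(3,6),(3,7),(4,7),(7,3)$, short ad hoc (or computer-verified) constructions of bases of size $3,3,3$ and $4$ respectively suffice.

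\medskip
The main technical work lies in the lower bounds. For two partitions $P=\{B_1,\dots,B_b\}$ and $Q=\{B_1',\dots,B_b'\}$, introduce the \emph{intersection matrix} $C=(c_{ij})$ with $c_{ij}=|B_i\cap B_j'|$; this is a nonnegative integer $b\times b$ matrix with constant row and column sums equal to $a$. Whenever some $c_{ij}\ge 2$, the subgroup $\sym{B_i\cap B_j'}$ is a nontrivial element of the common stabilizer, so in proving $\bz(a,b)\ge 3$ one may assume $C$ is a $0/1$-matrix. Then each nonempty $B_i\cap B_j'$ is a singleton, and the common stabilizer is precisely the group of pairs $(\tau,\tau')\in\sym{b}\times\sym{b}$ with $c_{\tau(i),\tau'(j)}=c_{i,j}$: every such pair lifts uniquely to a permutation of $\{1,\dots,n\}$ sending the element of $B_i\cap B_j'$ to the element of $B_{\tau(i)}\cap B_{\tau'(j)}'$. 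Hence $\bz(a,b)\ge 3$ reduces to the assertion that the bipartite graph $H(P,Q)$ with biadjacency matrix $J-C$ admits a nontrivial bipartition-preserving automorphism. In the infinite family $b=a+2$ this graph is $2$-regular, hence a disjoint union of even cycles of length $\ge 4$; the cyclic rotation of any one such cycle (leaving the rest of $H$ fixed) is a nontrivial automorphism, yielding $\bz(a,a+2)\ge 3$. The three sporadic pairs $(3,6),(3,7),(4,7)$ reduce analogously to finite statements that every $3$- or $4$-regular bipartite graph on $6+6$ or $7+7$ vertices has a nontrivial bipartition-preserving automorphism, which one settles by inspection of the (few) isomorphism classes.

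\medskip
The most delicate case is the lower bound $\bz(7,3)\ge 4$: one must rule out bases of size three. Given three partitions $P_1,P_2,P_3$ of $\{1,\dots,21\}$ into three blocks of size $7$, define $\phi\colon\{1,\dots,21\}\to\{1,2,3\}^3$ by sending $x$ to the triple of block-labels it receives in $P_1,P_2,P_3$. A non-singleton fibre of $\phi$ yields a nontrivial transposition in the common stabilizer, so one may assume $\phi$ is injective; then its image complement $M:=\{1,2,3\}^3\setminus\phi(\{1,\dots,21\})$ is a $6$-subset of the $3$-cube meeting each of the nine axis-aligned $3\times 3$ planes in exactly $2$ points. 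Under this identification, the common stabilizer of $\{P_1,P_2,P_3\}$ is isomorphic to the coordinatewise $(\sym{3})^3$-stabilizer of $M$, so the bound reduces to the combinatorial claim: \emph{every $6$-subset of $\{1,2,3\}^3$ meeting each axis-aligned plane in $2$ points admits a nontrivial $(\sym{3})^3$-stabilizer.} Establishing this claim, by classifying the $(\sym{3})^3$-orbits of such configurations $M$ and exhibiting a nontrivial stabilizing triple in each case, is the \textbf{main obstacle} and the technical heart of the argument.
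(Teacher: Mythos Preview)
Your proposal addresses only item~(4); items~(1)--(3) are untouched, and the paper's machinery for those (Lemma~\ref{lower-bound}, Corollaries~\ref{cor:maincase}--\ref{cor:small}, Lemma~\ref{lem:complement}, and the separate argument for $b=2$) is substantial and not subsumed by anything you write.

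For item~(4) itself your route is genuinely different from the paper's and, as an outline, sound. The paper's treatment is almost entirely by reference: every case with $a\le b$---so $(3,6),(3,7),(4,7)$ and the whole family $b=a+2$---is read off from~\cite[Theorem~2]{B10} (restated here as Theorem~\ref{thm:a<b}), and the one remaining case $(7,3)$ is dispatched by a \textsc{magma} computation; no intrinsic argument is supplied. Your reduction of the two-partition lower bound to bipartition-preserving automorphisms of the bipartite graph on $J-C$ is correct, and your observation that for $b=a+2$ this graph is $2$-regular---hence a union of even cycles each admitting a rotation by two---yields a clean conceptual proof of $\bz(a,a+2)\ge 3$ that the paper does not spell out. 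Your reduction of $\bz(7,3)\ge 4$ to the $(\sym{3})^3$-rigidity of regular $6$-subsets of $\{1,2,3\}^3$ is exactly the viewpoint of Remark~\ref{rempablopablo}, but whereas the paper hands this to the computer you propose an explicit orbit classification. What your approach buys is transparency and self-containment; what it costs is that several pieces remain unexecuted: the ``shifted transversal'' construction for $\bz(a,a+2)\le 3$ is not specified precisely enough to verify as stated, and the finite bipartite-graph checks for the three sporadic lower bounds and the $(\sym{3})^3$-orbit classification for $(7,3)$ are asserted but not carried out.
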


\begin{thm}\label{thrm:main2}
Let $a$ and $b$ be positive integers with $a,b\ge 2$ and let $\bz'(a,b)$ denote the base size of the alternating group of degree $n=ab$ in its action on partitions into $b$ parts of cardinality $a$. Then either $\bz'(a,b)=\bz(a,b)$, or one of the following holds:
\begin{enumerate}
\item\label{eqthrm2:main1} $(a,b)\in \{(2,3),(3,6),(3,7),(4,7),(7,3)\}$,
\item\label{eqthrm2:main2} $b=a+2$ and $a\ge 5$,
\item\label{eqthrm2:main3} $b\ge3$, $a=b^k-1$ for some $k\ge 2$, $b<k+\lfloor\frac{k+1}{2}\rfloor+2$ and $(b,k)\ne (4,2)$,
\item\label{eqthrm2:main4}$b=2$ and $a\in \{b^k-1,b^k-2\}$ for some $k\ge 2$.
\end{enumerate}
In each of these cases, $\bz'(a,b)=\bz(a,b)-1.$
\end{thm}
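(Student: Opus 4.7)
The starting point is that $\alt{n}$ has index $2$ in $\sym{n}$, so for any tuple $B$ of partitions $\mathrm{Stab}_{\alt{n}}(B) = \mathrm{Stab}_{\sym{n}}(B)\cap \alt{n}$ has index $1$ or $2$ in $\mathrm{Stab}_{\sym{n}}(B)$. Hence every $\sym{n}$-base is an $\alt{n}$-base, and given an $\alt{n}$-base $C$ of size $\bz'(a,b)$ the group $\mathrm{Stab}_{\sym{n}}(C)$ has order at most $2$, so adjoining at most one further partition yields a $\sym{n}$-base. This gives
\[\bz(a,b)-1\le \bz'(a,b)\le \bz(a,b),\]
so the theorem reduces to identifying when the lower bound is attained.

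A tuple $B$ of size $\bz(a,b)-1$ realizes $\bz'(a,b)=\bz(a,b)-1$ if and only if $\mathrm{Stab}_{\sym{n}}(B)=\{1,\sigma\}$ for some odd involution $\sigma$: a stabilizer of order $\ge 3$ contains a nontrivial element of $\alt{n}$, and an even involution already lies in $\alt{n}$. The plan is therefore to classify precisely the $(a,b)$ admitting such a $B$, and the theorem asserts this list is exactly (\ref{eqthrm2:main1})--(\ref{eqthrm2:main4}).

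For the existence direction, the natural candidate for $\sigma$ is a transposition $(x,y)$, which fixes a partition if and only if $x$ and $y$ share a part. The sporadic cases in (\ref{eqthrm2:main1}) are handled by finite checks. For $b=a+2$ in (\ref{eqthrm2:main2}), Theorem~\ref{thrm:main} only requires $\bz(a,b)=3$ partitions, so one takes two partitions that share a single ``frozen'' pair of points in a common cell and are otherwise chosen so that no other nontrivial permutation stabilizes both. The families (\ref{eqthrm2:main3}) and (\ref{eqthrm2:main4}) exploit the slack $a=b^k-1$ or $b^k-2$ in the bound $\bz(a,b)=\lceil\log_b(a+2)\rceil+1$: this slack leaves just enough room in a $\lceil\log_b(a+2)\rceil$-tuple of partitions to separate every pair of points except one such frozen pair, which is then fixed by its transposition.

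The main obstacle is the opposite direction: for $(a,b)$ outside the exceptional list, one must show that every tuple of length $\bz(a,b)-1$ whose $\sym{n}$-stabilizer has order exactly $2$ is generated by an even permutation. This requires revisiting the extremal analysis behind Theorem~\ref{thrm:main}, now carrying the parity of any residual involution as additional data. The combinatorial heart is the observation that a tuple whose $\sym{n}$-stabilizer is exactly a transposition $(x,y)$ must place $x$ and $y$ in a common part of every partition while separating every other pair of points, and determining when this is compatible with $\bz(a,b)-1$ partitions of shape $a^b$ yields precisely the arithmetic constraints in (\ref{eqthrm2:main2})--(\ref{eqthrm2:main4}). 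Odd involutions with more than one transposition impose strictly stronger ``frozen pair'' constraints and are eliminated by the same machinery. Executing this parity-refined case analysis across each regime of Theorem~\ref{thrm:main} is where the bulk of the work lies.
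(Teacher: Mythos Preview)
Your sandwich inequality and the characterisation of the drop via an odd involution in the $\sym{n}$-stabiliser are correct, and the existence sketch via a ``frozen pair'' is in the right spirit. The genuine gap is in the non-existence direction. The paper does \emph{not} carry out a parity-refined case analysis across the regimes of Theorem~\ref{thrm:main}; instead a single pigeonhole lower bound (Lemma~\ref{lower-bound-alt}) does almost all the work. Any $\alt{\Omega}$-base of size $\ell$ gives a map $\Omega\to\Delta^\ell$ with at most one repeated value (Remark~\ref{remalt}), forcing $b^\ell+1\ge ab$ and hence $\bz'(a,b)\ge\lceil\log_b(a+1)\rceil+1$. For every $(a,b)$ outside the exceptional list this already equals $\bz(a,b)$, \emph{except} when $a=b^k-1$ (and, for $b=2$, also $a=2^k-2$). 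So the non-existence problem is concentrated entirely at these boundary values of $a$, not spread across all regimes as your outline suggests; you have missed this reduction.

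For the residual case $a=b^k-1$ with $b\ge3$, the paper's argument (Lemma~\ref{lem:22}) is again not a parity analysis. A $(k+1)$-tuple that is an $\alt{\Omega}$-base must use a duplicated label, contributing a transposition $\tau$ to the $\sym{\Omega}$-stabiliser; the paper then shows, by pigeonhole on where the zeros can lie among the $b+1$ ``removed'' elements of $\Delta^{k+1}$, that once $b\ge k+\lfloor(k+1)/2\rfloor+2$ some nontrivial $\sigma\in(\sym{\Delta})^{k+1}$ also stabilises the set. Then $\langle\tau,\sigma\rangle$ has order at least $4$ and meets $\alt{\Omega}$ nontrivially. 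This pigeonhole is precisely where the threshold in part~(\ref{eqthrm2:main3}) comes from, which your sketch leaves unexplained on both sides. Separately, your assertion that odd involutions with more than one transposition impose ``strictly stronger frozen-pair constraints'' is not justified: such an involution can permute the parts of each $\Sigma_i$ nontrivially rather than fixing every part setwise, so the frozen-pair picture does not apply to it at all. The paper sidesteps this difficulty entirely via the counting arguments above.
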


\subsection{Notation}
Given a finite set $\Omega$, we denote by $\sym{\Omega}$ the symmetric group on $\Omega$, and by $\alt{\Omega}$ the alternating group on $\Omega$. Moreover, when $|\Omega|=n$ and when only the cardinality of $\Omega$ is  relevant for our arguments, we simply write $\sym{n}$ and $\alt{n}$.

A partition $\Sigma$ of $\Omega$ is said to be {\em \textbf{regular}} or {\em \textbf{uniform}} if all parts in $\Sigma$ have the same cardinality. We say that the partition $\Sigma$ is an {\mathversion{bold}$(a,b)$}{\em \textbf{-regular partition}} if $\Sigma$ consists of $b$ parts each having cardinality $a$. In particular, $n=|\Omega|=ab$. 
A partition $\Sigma$ of $\Omega$ is said to be {\em \textbf{trivial}} if $\Sigma$ equals the universal relation $\Sigma=\{\Omega\}$ or if $\Sigma$ equals the equality relation $\Sigma=\{\{\omega\}\mid \omega\in \Omega\}$.  

Let $G$ be a permutation group on $\Omega$. Given a subset $\Delta$ of $\Omega$ and a partition $\Sigma$ of $\Omega$, we let 
\begin{align*}
\nor G {\Delta}&:=\{g\in G\mid \Delta^g= \Delta\},\\
\nor G {\Sigma}&:=\{g\in G\mid \Gamma^g\in \Sigma,\forall \Gamma\in \Sigma\}
\end{align*}
denote the stabilizer in $G$ of the subset $\Delta$ and of the partition $\Sigma$ of $\Omega$.

Let $G$ be a permutation group on $\Omega$. For $\Lambda = \{\omega_1, \ldots, \omega_k \} \subseteq \Omega$, we write
 $G_{(\Lambda)}$ or $G_{\omega_1 ,\omega_2 ,\ldots,\omega_k}$ for the \textit{\textbf{pointwise
stabilizer}} of $\Lambda$ in $G$. If $G_{(\Lambda)} = \{1\}$, then we say that $\Lambda$ is a \textit{\textbf{base}}. The size of a smallest possible base is known as the \textit{\textbf{base size}} of $G$ and it is customary to denote it by $\mathrm{b}(G)$ or (more precisely) by $\mathrm{b}_\Omega(G)$.

Let $a$ and $b$ be integers with $a,b\ge1$. We denote by $$\bz(a,b)$$ the base size of the symmetric group 
$\sym{ab}$ in its action on the $(a,b)$-regular partitions of $\{1,\ldots,ab\}$, and by $$\bz'(a,b)$$ the base size of the alternating group $\alt{ab}$ in its action on the same collection. It is clear that the stabilizer of a point in this action of $\sym{ab}$ is isomorphic to the wreath product $\sym{a}\,\mathrm{wr}\,\sym{b}$ endowed of its imprimitive action on $\{1,\ldots,ab\}$, and for the action of $\alt{ab}$ is the subgroup of even permutations.

It is not hard to show that $\sym{ab}$ and $\alt{ab}$ act faithfully on the set of $(a,b)$-regular partitions, unless $b=1$ or $a=b=2$. 

In the rest of the paper, to avoid degeneracies, we suppose
$$a\ge 2\hbox{ and }b\ge 2$$
and we let $\Omega$ be a set of cardinality $ab$.
Moreover, we let $\Delta:=\mathbb{Z}/b\mathbb{Z}$. We often identify $\Delta$ with the set $\{0,\ldots,b-1\}$. 
Whenever we are performing arithmetic on the elements of $\Delta$ in this paper, it should be understood that the arithmetic is being taken modulo $b$.

\subsection{Structure of the paper} The structure of the paper is straightforward. In Section~\ref{sec:prelims}, we give some preliminary facts and most importantly we explain our approach for proving Theorems~\ref{thrm:main} and~\ref{thrm:main2}. In Section~\ref{sec:bounds}, we prove an upper bound for $\bz(a,b)$ and $\bz'(a,b)$ and we prove an auxiliary lemma. In Section~\ref{sec:basege3}, we use the auxiliary result of the previous section for constructing various bases when $b\ge 3$; we then apply these constructions in Section~\ref{sec:basesizege3} to prove Theorems~\ref{thrm:main} and~\ref{thrm:main2} when $b\ge3$. The rest of the paper deals with the case $b=2$.

\section{Preliminaries}\label{sec:prelims}

Let $\Omega$ be a finite set of cardinality $ab$ and let $\Sigma_1,\ldots,\Sigma_\ell$ be $(a,b)$-regular partitions of the set $\Omega$.

Let $\omega$ be an element of   $\Omega$. Since $\Sigma_i$ is a partition of $\Omega$, for every $i\in \{1,\ldots,\ell\}$, there exists a unique $X_{i,\omega}\in \Sigma_i$ with $\omega\in X_{i,\omega}$. Therefore we have a natural function
\begin{center}
    \begin{tikzcd}[cells = {nodes={minimum width=3.5em, inner xsep=0pt}},
                   row sep=0pt]
\Omega \ar[r]           &   \Sigma_i,   \\
\omega      \ar[r, mapsto]   &  X_{i,\omega}.
\end{tikzcd}
\end{center}
By combining these mappings for each $i\in \{1,\ldots,\ell\}$, we obtain a natural function
\begin{center}
    \begin{tikzcd}[cells = {nodes={minimum width=3.5em, inner xsep=0pt}},
                   row sep=0pt]
\theta:\Omega \ar[r]           &   \Sigma_1\times \cdots\times \Sigma_\ell,   \\
\omega      \ar[r, mapsto]   & \omega^\theta:= (X_{1,\omega},\ldots,X_{\ell,\omega}).
\end{tikzcd}
\end{center}
Since $b=|\Sigma_i|$ for every $i\in \{1,\ldots,\ell\}$, we may identify each $\Sigma_i$ with the set $\Delta=\mathbb{Z}/b\mathbb{Z}=\{0,1,\ldots,b-1\}$. In this way, we obtain a function 
$$\Omega\to \Delta^\ell=\left(\mathbb{Z}/b\mathbb{Z}\right)^\ell.$$

Suppose now that $\Sigma_1,\ldots,\Sigma_\ell$ satisfy the hypothesis:
\begin{equation}\label{uglyyy}
\forall i\in \{1,\ldots,\ell\}, \forall X_i\in \Sigma_i,\quad |X_1\cap X_2\cap\cdots\cap X_\ell|\le 1.
\end{equation}
Now,~\eqref{uglyyy} guarantees that the mapping  $\Omega\to \Sigma_1\times \cdots \times \Sigma_\ell$ is injective. In particular, when~\eqref{uglyyy} holds, by identifying $\Sigma_1\times\cdots \times \Sigma_\ell$ with $\Delta^\ell$, we may identify $\Omega$ as a subset of $\Delta^\ell$.

When dealing with the base size of $\alt\Omega$, it will also be important to consider a second hypothesis: there exists $(\bar{X}_1,\bar{X}_2,\ldots,\bar{X}_\ell)\in \Sigma_1\times \Sigma_2\times\cdots\times \Sigma_\ell$ such that $\forall i\in \{1,\ldots,\ell\}$ and $\forall X_i\in \Sigma_i$,
\begin{eqnarray}\label{uglyyyY}
|X_1\cap X_2\cap\cdots\cap X_\ell|\le 1&\textrm{when }(X_1,X_2,\ldots,X_\ell)\ne (\bar{X}_1,\bar{X}_2,\ldots,\bar{X}_\ell),\\
|X_1\cap X_2\cap\cdots\cap X_\ell|= 2&\textrm{when }(X_1,X_2,\ldots,X_\ell)= (\bar{X}_1,\bar{X}_2,\ldots,\bar{X}_\ell).\nonumber
\end{eqnarray}
Here,~\eqref{uglyyyY} guarantees that, except for the two elements in $\bar{X}_1\cap\bar{X}_2\cap\cdots\cap\bar{X}_\ell$, the mapping  $\Omega\to \Sigma_1\times \cdots \times \Sigma_\ell$ is injective. 

\begin{rem}\label{rempablo}{\rm If $\Sigma_1,\ldots,\Sigma_\ell$ are $(a,b)$-regular partitions of $\Omega$ witnessing that $\ell=\bz(a,b)$, then~\eqref{uglyyy} holds true. Indeed, if $|X_1\cap X_2\cap\cdots\cap X_\ell|\ge 2$ for some $X_i\in \Sigma_i$, then the transposition of $\sym{\Omega}$ interchanging two elements of $X_1\cap X_2\cap\cdots\cap X_\ell$ lies in $\bigcap_{i=1}^\ell\nor {\sym{ab}}{\Sigma_i}$, contradicting the fact that the pointwise stabilizer of $\Sigma_1,\ldots,\Sigma_\ell$ in $\sym{ab}$ is the identity.}
\end{rem}

\begin{rem}\label{remalt}{\rm If $\Sigma_1,\ldots,\Sigma_\ell$ are $(a,b)$-regular partitions of $\Omega$ witnessing that $\ell=\bz'(a,b)$, then either~\eqref{uglyyy} or~\eqref{uglyyyY} holds true. Indeed, if $|X_1\cap X_2\cap\cdots\cap X_\ell|\ge 3$ for some $X_i\in \Sigma_i$, then the $3$-cycle of $\alt{\Omega}$ rotating three elements of $X_1\cap X_2\cap\cdots\cap X_\ell$ lies in $\bigcap_{i=1}^\ell\nor {\alt{ab}}{\Sigma_i}$, contradicting the fact that the pointwise stabilizer of $\Sigma_1,\ldots,\Sigma_\ell$ in $\alt{ab}$ is trivial. Likewise, if $|X_1\cap X_2\cap\cdots\cap X_\ell|= 2$ for two different collections of $X_i\in \Sigma_i$, then the transposition of $\alt{\Omega}$ interchanging the two elements of each lies in $\bigcap_{i=1}^\ell\nor {\alt{ab}}{\Sigma_i}$, contradicting the fact that the pointwise stabilizer of $\Sigma_1,\ldots,\Sigma_\ell$ in $\alt{ab}$ is trivial. }
\end{rem}

\begin{rem}\label{rempablo2}{\rm
Suppose that $\Sigma_1,\ldots,\Sigma_\ell$ are $(a,b)$-regular partitions of $\Omega$ such that 
\begin{equation}\label{uglyy}
\forall i\in \{1,\ldots,\ell\}, \forall X_i\in \Sigma_i,\quad |X_1\cap X_2\cap\cdots\cap X_\ell|= 1.
\end{equation}
Now, the natural embedding $\theta:\Omega\to \Delta^\ell$ of $\Omega$ in $\Delta^\ell$ discussed above is also surjective.
This shows that there exists a natural one to one correspondence between $\Omega$ and $\Sigma_1\times \Sigma_2\times\cdots\times \Sigma_\ell$, which endows $\Omega$ of the natural structure of a Cartesian power $\Delta^\ell$, where as usual $\Delta=\mathbb{Z}/b\mathbb{Z}$. This is the typical point of view taken by a permutation group theorist dealing with Cartesian decompositions (see for instance~\cite{Praeger}): a Cartesian decomposition $\Omega=\Delta^\ell$ is a set of $\ell$ regular partitions  $\Sigma_1,\ldots,\Sigma_\ell$ satisfying~\eqref{uglyy}.}
\end{rem}

\begin{rem}\label{rempablopablo}{\rm
 Let $\Sigma_1,\ldots,\Sigma_\ell$ be $(a,b)$-regular partitions of $\Omega$ and let $\theta:\Omega\to \Sigma_1\times\cdots\times \Sigma_\ell$ be as above. Set $$G:=\bigcap_{i=1}^\ell\nor {\sym{\Omega}}{\Sigma_i}.$$

As $G$ fixes $\Sigma_i$ setwise for each $i$, $G$ acts on the set $\Sigma_i$ and hence $G$ has a natural action on $\Sigma_1\times \cdots \times \Sigma_\ell$. Now, the action of $G$ on $\Omega$ is compatible with the action of $G$ on $\Sigma_1\times \cdots \times \Sigma_\ell$, that is, 
$$\forall g\in G,\,\forall \omega\in \Omega,\quad (\omega^g)^\theta=(\omega^\theta)^g.$$

In particular, when~\eqref{uglyyy} holds true and we identify $\Omega$ with a subset of $\Delta_1\times\cdots\times \Delta_\ell$, the group $G$ is simply the subgroup of the Cartesian product $\sym{\Delta}\times\cdots\times\sym{\Delta}$ fixing the subset $\Omega$ of $\Delta^\ell$ setwise.
\begin{center} {\em This is  the point of view that we take in this paper.}\end{center}

In fact, from this we see that a collection $\Sigma_1, \ldots, \Sigma_\ell$ of $(a,b)$-regular partitions of $\Omega$ is a base for $\mathrm{Sym}(\Omega)$ if and only if, by viewing $\Omega$ as a subset of $\Delta^\ell$, the subgroup of $\sym{\Delta}\times \cdots \times \sym{\Delta}$ fixing $\Omega$ setwise is the identity. A similar comment applies for $\mathrm{Alt}(\Omega)$, when~\eqref{uglyyy} or~\eqref{uglyyyY} holds.}
\end{rem}

%
%
%

In order to work out base sizes, we need to rely on previous work that addresses the case that $a\le b$. We phrase this result in a way tailored to our application.

\begin{thm}[{{\cite[Theorem~2]{B10}}}]\label{thm:a<b}
Let $a$ and $b$ be positive integers with $2\le a\le b$.  Then
\[
\bz(a,b)=
\begin{cases}
\textrm{undefined}&\textrm{when }(a,b)=(2,2),\\
4&\textrm{when }(a,b)=(2,3),\\
3&\textrm{when }a=2 \textrm{ and }b\ge 4,\\
3&\textrm{when }a\ge 3 \textrm{ and }b=a+2,\\
3&\textrm{when }(a,b)\in \{(3,6),(3,7),(4,7)\},\\
\lceil\log_b(a+2)\rceil+1&\textrm{otherwise}.
\end{cases}
\]
\end{thm}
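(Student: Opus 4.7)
The plan is to reduce everything to the Cartesian embedding picture of Remarks~\ref{rempablo}--\ref{rempablopablo}: a base of size $\ell$ for $\sym{ab}$ corresponds to an injection $\theta:\Omega\hookrightarrow \Delta^\ell$ (with $\Delta=\mathbb{Z}/b\mathbb{Z}$) each of whose coordinate projections has all fibers of size $a$, subject to the further requirement that the setwise stabilizer of $\theta(\Omega)$ in $\sym{\Delta}\times\cdots\times\sym{\Delta}$ is trivial. All statements in Theorem~\ref{thm:a<b} would then be proved by producing such an embedding (upper bound) or by ruling out its existence (lower bound) for the appropriate $\ell$.

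For the upper bound $\bz(a,b)\le \lceil \log_b(a+2)\rceil+1$, set $\ell=\lceil \log_b(a+2)\rceil+1$, so that $b^{\ell-1}\ge a+2$. Since $a\le b$, the number $\ell$ lies in $\{2,3\}$ and the construction can be made completely explicit. I would pick $a+2$ distinguished points of $\Delta^{\ell-1}$ (which is possible by the size inequality) and arrange the fibers of the last coordinate projection as $b$ carefully chosen $a$-subsets of $\Delta^{\ell-1}$, indexed by $\Delta$. The selection must achieve two goals simultaneously: each of the first $\ell-1$ coordinate projections of $\theta(\Omega)$ has all fibers of size $a$, and the resulting configuration is ``rigid", i.e., no nontrivial element of $\sym{\Delta}^\ell$ preserves it. The second property is the heart of the construction and is exactly what forces the numerical threshold $a+2$ rather than the naive $a$ coming from $|\Omega|\le b^\ell$.

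For the lower bound $\bz(a,b)\ge \lceil \log_b(a+2)\rceil+1$, assume instead that $\ell\le \lceil \log_b(a+2)\rceil$, so $b^{\ell-1}\le a+1$. The cardinality bound $|\Omega|=ab\le b^\ell$ forces $b^{\ell-1}\ge a$, and so there are only two subcases. If $b^{\ell-1}=a$, then $\theta$ is a bijection $\Omega\to\Delta^\ell$ and the setwise stabilizer of $\theta(\Omega)$ is all of $\sym{\Delta}^\ell$. If $b^{\ell-1}=a+1$, then each fiber of the last coordinate projection is the complement in $\Delta^{\ell-1}$ of a single point $\bar a_i$, and a short combinatorial analysis of the multiset $\{\bar a_0,\ldots,\bar a_{b-1}\}\subseteq \Delta^{\ell-1}$---together with the analogous fiber conditions imposed by the other coordinates---produces a nontrivial element of $\sym{\Delta}^\ell$ preserving $\theta(\Omega)$, contradicting the assumption that $\Sigma_1,\ldots,\Sigma_\ell$ is a base.

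The main obstacle, and the reason for the listed exceptional cases $\{(2,2),(2,3),(3,6),(3,7),(4,7)\}$ and $b=a+2$, is that in the regime $a\le b$ the generic formula takes only the values $2$, $3$, or $4$, so the argument always runs very close to its boundary; in these exceptional sizes either the rigid placement required for the upper bound does not exist, or the generic lower-bound obstruction is absent and one obtains a small additional base. I would dispose of these by direct enumeration (they are genuinely small) or by a dedicated ad hoc argument producing a nontrivial coordinate stabilizer for every candidate embedding of the generic size. The cleanest presentation is to prove the generic formula first and then peel off the boundary exceptions one by one, which is exactly the shape of the statement of Theorem~\ref{thm:a<b}.
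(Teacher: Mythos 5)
The paper does not prove this statement at all: Theorem~\ref{thm:a<b} is imported verbatim from \cite[Theorem~2]{B10} (Burness--Garonzi--Lucchini), so there is no internal proof to compare with, and your argument has to stand entirely on its own. As it stands it does not. The upper bound is never actually established: in the dominant regime $a\le b$ the generic value is $2$ (whenever $b\ge a+3$) or $3$, and producing a base of size $2$ means exhibiting two $(a,b)$-regular partitions whose common stabilizer is trivial, equivalently an $a$-regular bipartite graph with both parts of size $b$ admitting no nontrivial part-preserving automorphism. That existence question is the entire content of the theorem in this range, and your sketch replaces it with ``carefully chosen $a$-subsets'' whose rigidity is simply asserted. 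Moreover the claim that rigidity ``forces the numerical threshold $a+2$'' is not true as stated: the pairs $(3,6),(3,7),(4,7)$ satisfy $b\ge a+3$, so the counting threshold is met, yet the base size is $3$; so no counting argument of the kind you describe can be the explanation, and the sporadic exceptions are precisely where the asymmetric configuration fails to exist.

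The treatment of the exceptional cases is also a genuine gap. They are not all ``genuinely small'': $b=a+2$ (all $a\ge 3$) and $a=2$, $b\ge 4$ are infinite families, so direct enumeration cannot dispose of them. For each of these you need two things that the proposal does not supply: a lower-bound argument showing that no base of the generic size exists (for $b=a+2$, for instance, one can argue that the bipartite complement of the intersection pattern of any two regular partitions is $2$-regular, hence a disjoint union of even cycles, which always carries a nontrivial side-preserving automorphism), and an explicit base of size $3$. Finally, the lower-bound portion you do give --- the two subcases $b^{\ell-1}=a$ and $b^{\ell-1}=a+1$, handled by the full product $\sym{\Delta}^{\ell}$ and by the diagonal shift after relabelling --- is correct but only reproves the generic inequality $\bz(a,b)\ge\lceil\log_b(a+2)\rceil+1$ (it is essentially Lemma~\ref{lower-bound} of this paper); it says nothing about any of the exceptional lines of the statement, where the true value strictly exceeds that bound. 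So the proposal is an outline whose missing steps are exactly the hard parts of \cite[Theorem~2]{B10}.
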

In other words, from Theorem~\ref{thm:a<b}, we see that except when $a=2$, or $b=a+2$, or for three exceptional cases,  we have $\bz(a,b)=\lceil\log_b(a+2)\rceil+1$. 

\section{Bounds}\label{sec:bounds}

Using the observations made in Section~\ref{sec:prelims}, we obtain a lower bound for $\bz(a,b)$.

\begin{lem}\label{lower-bound}
We have $\bz(a,b) \ge \lceil \log_b(a+2)\rceil +1.$
\end{lem}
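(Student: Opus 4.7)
The plan is to translate the condition $\bz(a,b)=\ell$ into the combinatorial language developed in Section~\ref{sec:prelims} and then extract the inequality $b^{\ell-1}\ge a+2$. Setting $\ell:=\bz(a,b)$, fix a base $\Sigma_1,\dots,\Sigma_\ell$ of minimal size for the action of $\sym{\Omega}$ on $(a,b)$-regular partitions. By Remark~\ref{rempablo}, condition~\eqref{uglyyy} holds, so the map $\theta\colon \Omega\to \Delta^\ell$ of Section~\ref{sec:prelims} is injective. Writing $S:=\theta(\Omega)$, we have $|S|=ab$; and for each coordinate $i\in\{1,\dots,\ell\}$ and each $d\in \Delta$, exactly $a$ elements of $S$ have $i$-th coordinate $d$, because the block of $\Sigma_i$ labelled by $d$ has cardinality $a$. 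In particular $ab=|S|\le b^\ell$, so the weak bound $a\le b^{\ell-1}$ is immediate.

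Using Remark~\ref{rempablopablo}, I would identify the pointwise stabilizer of $\{\Sigma_1,\dots,\Sigma_\ell\}$ in $\sym{\Omega}$ with the subgroup $G$ of $\sym{\Delta}\times\cdots\times\sym{\Delta}$ fixing $S$ setwise, acting on $S$ by the restriction of the coordinate-wise action on $\Delta^\ell$. Since $\Sigma_1,\dots,\Sigma_\ell$ is a base, $G$ acts trivially on $S$; and since each coordinate projection of $S$ is surjective onto $\Delta$, any element of $G$ acting trivially on $S$ must have every component equal to the identity of $\sym{\Delta}$. Hence $G=1$, and equivalently the setwise stabilizer of the complement $C:=\Delta^\ell\setminus S$ in $\sym{\Delta}^\ell$ is trivial.

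The main step is to rule out the two boundary cases $a=b^{\ell-1}$ and $a=b^{\ell-1}-1$ by exhibiting a nontrivial element of $G$. If $a=b^{\ell-1}$, then $S=\Delta^\ell$ and all of $\sym{\Delta}^\ell$ stabilizes $S$, which is nontrivial since $b\ge 2$. If $a=b^{\ell-1}-1$, then $|C|=b^\ell-ab=b$, and, by counting $b^{\ell-1}-a=1$ elements of $C$ per coordinate value, each coordinate takes each value of $\Delta$ exactly once on $C$. Writing $C=\{c_0,\dots,c_{b-1}\}$ with $c_k=(\pi_1(k),\dots,\pi_\ell(k))$, the maps $\pi_j\colon\{0,\dots,b-1\}\to \Delta$ are all bijections; then for any nontrivial $\tau\in \sym{\{0,\dots,b-1\}}$ the element $(\pi_1\tau\pi_1^{-1},\dots,\pi_\ell\tau\pi_\ell^{-1})$ of $\sym{\Delta}^\ell$ sends $c_k$ to $c_{\tau(k)}$ for every $k$ and is nontrivial, contradicting $G=1$. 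I expect this ``generalized diagonal'' construction in the case $a=b^{\ell-1}-1$ to be the only step requiring genuine insight; the rest is bookkeeping on top of Section~\ref{sec:prelims}.

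Ruling out both boundary cases gives $a\le b^{\ell-1}-2$, i.e.\ $b^{\ell-1}\ge a+2$. Since $\ell$ is an integer, this yields $\ell\ge \lceil \log_b(a+2)\rceil+1$, as required.
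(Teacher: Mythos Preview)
Your proof is correct and follows essentially the same approach as the paper's: reduce to the setwise stabilizer of $S\subseteq\Delta^\ell$ in $\sym{\Delta}^\ell$, obtain the crude bound $a\le b^{\ell-1}$, and then rule out the two boundary values $a=b^{\ell-1}$ and $a=b^{\ell-1}-1$ by exhibiting a nontrivial stabilizing element. The only cosmetic differences are that the paper relabels the complement $C$ to the ``constant'' tuples $(i,\dots,i)$ and uses the add-one map, whereas you handle $C$ abstractly via the coordinate bijections $\pi_j$; and the paper phrases the conclusion through a case split on when $\lceil\log_b(a)\rceil=\lceil\log_b(a+2)\rceil$, whereas you deduce $a\le b^{\ell-1}-2$ directly.
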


\begin{proof}
Let $\ell=\bz(a,b)$. We identify $\Omega$ with a subset of $\Delta^\ell$.

In order to ensure that there are at least $ab$ distinct elements in $\Delta^\ell$ to identify with the elements of $\Omega$, we require $b^\ell \ge ab$. This implies that $\ell \ge \lceil \log_b(ab) \rceil=\lceil \log_b(a)\rceil +1$. 

Since $a\ge 2$ and $b\ge 2$, we have $$\lceil\log_b(a)\rceil \le \lceil\log_b(a+2)\rceil \le \lceil\log_b(a)\rceil+1.$$ Furthermore, $\lceil\log_b(a)\rceil = \lceil\log_b(a+2)\rceil$, completing the proof, unless either $a=b^{k-1}$ or $a+1=b^{k-1}$ for some $k \ge 2$. 

When $a=b^{k-1}$, we have $ab=b^k$. If $\ell=k$, then we must have $\Omega=\Delta^\ell$. Clearly, any permutation $\sigma \in (\sym{\Delta})^\ell$ fixes $\Delta^\ell$ setwise, contradicting Remark~\ref{rempablopablo}. Thus a base must have at least $\log_b(ab)+1=\lceil \log_b(a+2)\rceil +1$ elements when $a=b^{k-1}$.

When $a+1=b^{k-1}$, we have $ab=b^k-b$. If $\ell=k$, then $\Omega$ is identified with a subset of $\Delta^\ell$ that includes all but $b$ of its elements. Because we are dealing with regular partitions, for each of the coordinates every element of $\Delta$ must appear in exactly one of these $b$ elements. By relabelling the parts of the partitions if necessary, we may assume without loss of generality that these $b$ elements are $$(0,\ldots, 0), (1,\ldots, 1),\ldots, (b-1,\ldots, b-1).$$ 
Let $\sigma$ be the permutation that adds one to each coordinate of any element of $\Delta^\ell$. It is clear that $\sigma$ fixes the $b$ omitted elements setwise, and therefore fixes $\Omega$ setwise. So again we contradict Remark~\ref{rempablopablo}. Thus a base must have at least $$\log_b((a+1)b)+1=\lceil \log_b(a+2)\rceil +1$$ elements when $a+1=b^{k-1}$.
\end{proof}

When we are considering the alternating group, we initially require a simpler lower bound.

\begin{lem}\label{lower-bound-alt}
We have $\bz'(a,b) \ge \lceil \log_b(a+1)\rceil+1$.
\end{lem}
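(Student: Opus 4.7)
I would follow the strategy of Lemma~\ref{lower-bound}, using as input the (weaker) consequence of the base condition provided by Remark~\ref{remalt}. Let $\ell=\bz'(a,b)$ and fix $(a,b)$-regular partitions $\Sigma_1,\ldots,\Sigma_\ell$ realising this base size. By Remark~\ref{remalt} they satisfy either~\eqref{uglyyy} or~\eqref{uglyyyY}, and in both cases the natural map $\theta:\Omega\to\Delta^\ell$ has image of size at least $ab-1$. Therefore $b^\ell\ge ab-1$; since $b^\ell=ab-1$ would force $b\mid 1$, we actually have $b^\ell\ge ab$, equivalently $\ell\ge\lceil\log_b(a)\rceil+1$. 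A direct check shows $\lceil\log_b(a)\rceil = \lceil\log_b(a+1)\rceil$ unless $a=b^{k-1}$ for some $k\ge 2$, and in that exceptional case the target becomes $\ell\ge k+1$; I handle it by assuming $\ell=k$ and deriving a contradiction.

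Suppose first that~\eqref{uglyyy} holds. Then $|\theta(\Omega)|=ab=b^k$, so $\theta(\Omega)=\Delta^k$, and by Remark~\ref{rempablopablo} the whole group $(\sym\Delta)^k$ lifts into $\bigcap_i\nor{\sym\Omega}{\Sigma_i}$. I then exhibit a non-identity even element of this lifted group: for $b\ge 3$ a $3$-cycle in one coordinate lifts to a product of $b^{k-1}$ $3$-cycles on $\Omega$, hence is even; for $b=2$ the non-trivial swap in one coordinate lifts to a product of $2^{k-1}$ transpositions on $\Omega$, which is even since $k\ge 2$. Either way, this element of $\alt\Omega$ fixes every $\Sigma_i$, contradicting that $\Sigma_1,\ldots,\Sigma_\ell$ is a base for $\alt\Omega$.

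Suppose instead that~\eqref{uglyyyY} holds, so $|\theta(\Omega)|=ab-1=b^k-1$; as a set, $\theta(\Omega)=\Delta^k\setminus\{v^*\}$ for a unique $v^*\in\Delta^k$, while the special point $(\bar X_1,\ldots,\bar X_\ell)\in\theta(\Omega)$ has $\theta$-multiplicity $2$ and every other point of $\theta(\Omega)$ has multiplicity $1$. Regularity of each $\Sigma_i$ forces $\sum_{v\in\Delta^k,\,v_i=c}m(v)=a$ for every coordinate $i$ and every $c\in\Delta$, where $m$ denotes the $\theta$-multiplicity. A four-case analysis on whether $c=v_i^*$ and whether $\bar X_i=c$ shows this equation is consistent only when $\bar X_i=v_i^*$ for every $i$, which would place the special point at $v^*$, outside $\theta(\Omega)$, a contradiction. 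The main (mildly) non-routine ingredient is this multiplicity double-count for~\eqref{uglyyyY}; everything else is a direct transposition of the template of Lemma~\ref{lower-bound}.
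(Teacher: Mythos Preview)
Your proof is correct and follows essentially the same approach as the paper's. The only cosmetic differences are your indexing ($a=b^{k-1}$ versus the paper's $a=b^k$), and that you treat case~\eqref{uglyyyY} by an explicit multiplicity count forcing the doubled point to coincide with the missing point $v^*$, whereas the paper phrases the same observation as ``regularity forces the last element to be the unique remaining element of $\Delta^{k+1}$'' and thereby reduces immediately to case~\eqref{uglyyy}; your explicit exhibition of an even element of $(\sym{\Delta})^k$ is also a touch more detailed than the paper's one-line appeal to the existence of such an element.
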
 

\begin{proof}
Let $\ell=\bz'(a,b)$. As usual, any collection of $\ell$ $(a,b)$-regular partitions forming a base for $\mathrm{Alt}(\Omega)$ give rise to a natural map from $\Omega$ to $\Delta^\ell$.

 In order to ensure that there at least $ab$ distinct elements in $\Delta^\ell$ (with up to one element repeated twice, as explained in Remark~\ref{remalt}) to identify with the elements of $\Omega$, we require $b^\ell+1 \ge ab$.  Since $a\ge 2$ and $b\ge2$,  this implies that $$\ell \ge \lceil \log_b(ab-1) \rceil=\lceil \log_b(ab)\rceil=\lceil \log_b(a)\rceil +1.$$

Notice that $\lceil \log_b(a+1)\rceil=\lceil \log_b(a)\rceil$ (completing the proof) unless $a=b^k$ for some $k\ge1$. It remains only to show that, when $a=b^k$, we have $\ell >k+1=\lceil \log_b(a)\rceil+1$.

Suppose to the contrary that $\ell=k+1$. This would imply that we can identify $\Omega$ with $b^{k+1}$ elements of $\Delta^{k+1}$, where at least $b^{k+1}-1$ of these elements are distinct. In order for the elements of $\Delta^{k+1}$ to correspond to regular partitions as discussed in Section~\ref{sec:prelims}, however, the final element is certainly determined by the other $ab-1$ elements. Taking $ab-1=b^{k+1}-1$ distinct elements from $\Delta^{k+1}$ and requiring that the end result is a regular partition forces the final element to be the only remaining distinct element from $\Delta^{k+1}$. Now any non-identity even permutation in $(\sym{\Delta})^{k+1}$ fixes this collection setwise (and we have at least one such element because $k\ge 1$). Therefore we do not in fact have a base. This contradiction  shows that $\ell>k+1$, as claimed.
\end{proof}

For the next lemma and a number of the constructions that follow, we require a specific subset of $\Delta^{\ell+2}$ having $b$ elements, which we now define:
\begin{equation}
\label{eq:T}
T:=\{(\underbrace{0,\ldots, 0}_{\ell+1\textrm{ times}},1)\} \cup \{(x,\underbrace{x-1, \ldots, x-1}_{\ell \textrm{ times}}, x+1): 1 \le x \le b-1\}\subseteq \Delta^{\ell+2}.
\end{equation}

\begin{lem}\label{main-lemma}
Let $b$ and $\ell$ be positive integers with $b\ge 3$ and $\ell \ge 1$. Suppose there exists a subset $N$ of $\Delta^{\ell+2}$ with $|N|=ab$ and the following properties hold:
\begin{enumerate}
\item for every $i \in \Delta$ and for every $1 \le j \le \ell+2$, the number of elements of $N$ with $i$ in the $j$th coordinate is $a$; \label{condition:partition}
\item $T \subseteq N$;\label{condition:T}
\item if $(n_1, \ldots, n_{\ell+2}) \in N\setminus T$, then $n_{\ell+2} \neq n_1+1$; \label{condition:Tunique}
\item there is some constant $c\ne 1$ such that, for any $x \in \Delta$, the number of elements of $N$ whose first and last coordinates are equal to $x$ is $c$; and \label{condition:constant-equality}
\item for any $x\in \Delta$, the number of elements $(x,n_2, \ldots, n_{\ell+2})$ of $N$ with the property that $n_{\ell+2}=x+i$ is neither $1$ nor $c$, for any $i \in \Delta\setminus\{0,1\}$. \label{condition:no1s}
\end{enumerate}
Then $\bz(a,b) \le \ell+2$.
\end{lem}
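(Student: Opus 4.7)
The plan is to apply the dictionary of Remark~\ref{rempablopablo}: identify $\Omega$ with $N$ via any bijection (possible because $|N|=ab=|\Omega|$) and view the $\ell+2$ coordinate projections of $N$ as $\ell+2$ partitions $\Sigma_1,\ldots,\Sigma_{\ell+2}$ of $\Omega$. Condition~\eqref{condition:partition} guarantees that each $\Sigma_j$ is $(a,b)$-regular. By Remark~\ref{rempablopablo} it then suffices to prove that the only $\sigma=(\sigma_1,\ldots,\sigma_{\ell+2})\in (\sym{\Delta})^{\ell+2}$ that fixes $N$ setwise is the identity, since the pointwise stabilizer $\bigcap_{i=1}^{\ell+2}\nor{\sym{\Omega}}{\Sigma_i}$ is trivial precisely when this holds.

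So fix such a $\sigma$, and for $x,y\in\Delta$ set $f(x,y):=|\{n\in N : n_1=x,\ n_{\ell+2}=y\}|$. Because $\sigma$ permutes $N$, a direct check gives $f(x,y)=f(\sigma_1(x),\sigma_{\ell+2}(y))$ for all $x,y\in\Delta$. Combining condition~\eqref{condition:T} with conditions~\eqref{condition:Tunique}, \eqref{condition:constant-equality} and~\eqref{condition:no1s} shows that, for each fixed $x$, the function $y\mapsto f(x,y)$ takes the value $c$ exactly at $y=x$, the value $1$ exactly at $y=x+1$, and values outside $\{1,c\}$ elsewhere; in particular the hypothesis $c\ne 1$ is used here to keep these two distinguished values separate. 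Comparing these distinguished values on both sides of $f(x,y)=f(\sigma_1(x),\sigma_{\ell+2}(y))$ forces first $\sigma_{\ell+2}(x)=\sigma_1(x)$ and then $\sigma_{\ell+2}(x+1)=\sigma_1(x)+1$. Writing $\tau:=\sigma_1=\sigma_{\ell+2}$, this reads $\tau(x+1)=\tau(x)+1$, so $\tau$ is a translation $x\mapsto x+s$ of $\Delta$ for some $s\in\Delta$.

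To pin down $s$ and the remaining coordinates of $\sigma$, I would exploit condition~\eqref{condition:T}. By condition~\eqref{condition:Tunique} the set $T$ is exactly the subset of $N$ on which $n_{\ell+2}=n_1+1$, and this property is preserved by $\sigma$ because $\tau$ is a translation; hence $\sigma$ permutes $T$. Applying $\sigma$ to the two $T$-elements $(0,\ldots,0,1)$ and $(1,0,\ldots,0,2)$ produces elements of $T$ with first coordinates $s$ and $s+1$, and reading off their middle coordinates against the known shape of $T$ gives two formulas for $\sigma_i(0)$ (for each $2\le i\le \ell+1$) that are simultaneously consistent only when $s=0$ or $s=b-1$. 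The case $s=b-1$ is then ruled out by testing on a third element of $T$; this is where the hypothesis $b\ge 3$ becomes indispensable, since $b=2$ would let the two competing values $0$ and $b-2$ coincide. Thus $s=0$ and $\tau=\mathrm{id}$. A final pass through the elements of $T$ with $\tau=\mathrm{id}$ yields $\sigma_i(y)=y$ for every $y\in\{0,\ldots,b-2\}$ and every middle index $i\in\{2,\ldots,\ell+1\}$; since $\sigma_i\in\sym{\Delta}$, this in turn forces $\sigma_i(b-1)=b-1$, so $\sigma_i=\mathrm{id}$ and $\sigma$ is trivial as required.

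The principal obstacle is the short case analysis in the penultimate step. The element $(0,\ldots,0,1)$ of $T$ has a shape genuinely different from the other $(x,x-1,\ldots,x-1,x+1)$, and disentangling what this asymmetry permits for the translation parameter $s$ is exactly what the hypothesis $b\ge 3$ is designed to handle; everything else in the argument is essentially bookkeeping around the counting function $f$.
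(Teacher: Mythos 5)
Your proposal is correct, and it organizes the key step differently from the paper, so a comparison is worthwhile. Both arguments make the same reduction via Remark~\ref{rempablopablo} (identify $\Omega$ with $N$, get $(a,b)$-regular partitions from condition~(1), and show that only the identity of $(\sym{\Delta})^{\ell+2}$ fixes $N$ setwise), and both finish identically, by fixing every element of $T$ to pin down the middle coordinates. The difference is in the middle: the paper works with the fibres $N_i$ over the first coordinate, shows by contradiction (comparing the second coordinates of the images of $(0,\ldots,0,1)$ and $(1,0,\ldots,0,2)$) that $0^{\sigma_1}=0$, and then runs an induction around the cycle to prove $\sigma_1=\sigma_{\ell+2}=\mathrm{id}$ value by value. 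Your equivariance identity $f(x,y)=f(\sigma_1(x),\sigma_{\ell+2}(y))$, together with the profile ``value $c$ exactly at $y=x$, value $1$ exactly at $y=x+1$, neither elsewhere'' (which is exactly what conditions (2)--(5) and $c\ne 1$ give), yields in one stroke that $\sigma_1=\sigma_{\ell+2}=\tau$ is a translation $x\mapsto x+s$; this is a cleaner global statement that avoids the induction, at the price of then having to kill $s$ using the asymmetric element $(0,\ldots,0,1)$ of $T$, which is where $b\ge 3$ enters in both proofs. One small streamlining of your case analysis: the two elements $(0,\ldots,0,1)$ and $(1,0,\ldots,0,2)$ already exclude $s=b-1$ when $b\ge 3$, so no third element of $T$ is needed. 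Indeed, if $s=b-1$ the image of the first element is $(b-1,b-2,\ldots,b-2,0)$, forcing $\sigma_i(0)=b-2$ for $2\le i\le \ell+1$, while the image of the second has first coordinate $1+s=0$ and therefore equals $(0,\ldots,0,1)$, forcing $\sigma_i(0)=0$; these conflict precisely because $b\ge 3$ (they would coincide for $b=2$, which is the coincidence you flagged). Your extra test is harmless, and the argument as you outline it is sound and complete.
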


\begin{proof}
As usual, we let $\Omega$ be a set of cardinality $ab$ and we identify the elements of $\Omega$ with the elements of $N$, with the understanding that, if $\omega \in \Omega$ is identified with $(n_1, \ldots, n_{\ell+2})$ in $N$, then it lies in part $n_i$ of the $i$th partition. By~\eqref{condition:partition}, the identification of $\Omega$ with $N\subseteq \Delta^{\ell+2}$ gives rise to $\ell+2$ $(a,b)$-regular partitions of $\Omega$.

Let $\sigma=(\sigma_1, \ldots, \sigma_{\ell+2}) \in (\sym{\Delta})^{\ell+2}$. By Remark~\ref{rempablopablo}, we need to show that, if $\sigma$ fixes $N$ setwise, then $\sigma$ is the identity; this will complete the proof. For convenience, we will let $N_i$ denote the set of elements in $N$ whose first coordinate is $i$.

Let $m\in \Delta$ be such that $0^{\sigma_1}=m$, so $N_0^\sigma=N_m$. Suppose for the time being that $m \neq 0$. By \eqref{condition:T} and \eqref{condition:Tunique}, $(0,\ldots, 0, 1)$ is the only element of $N_0$ with $1$ as its final coordinate. By \eqref{condition:constant-equality} and \eqref{condition:no1s}, for every other value that appears as the final coordinate of some element of $N_0$, the number of elements of $N_0$ having this value as their final coordinate is neither $1$ nor $c$. Similarly, by \eqref{condition:T} and \eqref{condition:Tunique}, $(m,m-1,\ldots, m-1, m+1)$ is the only element of $N_m$ with $m+1$ as its final coordinate. By \eqref{condition:constant-equality} and \eqref{condition:no1s}, for every other value that appears as the final coordinate of some element of $N_m$, the number of elements of $N_m$ having this value as their final coordinate is neither $1$ nor $c$. This forces $$(0,\ldots, 0, 1)^\sigma=(m,m-1,\ldots,m-1,m+1).$$ In particular, $1^{\sigma_{\ell+2}}=m+1$ and $0^{\sigma_2}=m-1$.

In a similar vein, by \eqref{condition:constant-equality} and \eqref{condition:no1s},  $N_{m+1}$ has exactly $c$ elements that end with $m+1$, and $N_1$ is the only set $N_i$ having exactly $c$ elements that end with $1$. Now, the fact that $1^{\sigma_{\ell+2}}=m+1$ forces $N_1^\sigma=N_{m+1}$ and therefore $1^{\sigma_1}=m+1$. In particular, following the same logic as in the previous paragraph and using $b\ge 3$, we obtain $$(1,0,\ldots, 0,2)^\sigma=(m+1,m,\ldots, m,m+2).$$ However, this implies that $0^{\sigma_2}=m$, contradicting the conclusion of the previous paragraph. 

The possibility remains that $m=0$, so that $(0,\ldots, 0,1)$ is fixed by $\sigma$. In this case, $1^{\sigma_{\ell+2}}=1$ and $0^{\sigma_i}=0$ for every $1 \le i \le \ell+1$. 

The argument so far forms a base case for induction. Suppose that $j^{\sigma_{\ell+2}}=j$. 
Then, since $N_j$ has $c$ elements whose final digit is $j$ (by \eqref{condition:constant-equality}) and no other set $N_i$ with $i \neq j$ has this property (by \eqref{condition:no1s}), we have $N_j^\sigma=N_j$. Thus, $j^{\sigma_1}=j$, and therefore by \eqref{condition:T} and \eqref{condition:Tunique} we must have $$(j,j-1,\ldots ,j-1,j+1)^\sigma=(j,j-1,\ldots ,j-1,j+1).$$ Thus $(j+1)^{\sigma_{\ell+2}}=j+1$. This shows inductively that $\sigma_1$ and $\sigma_{\ell+2}$ are the identity. In turn, this implies that each of the elements of $T$ is individually fixed by $\sigma$. Thus $(x,x-1, \ldots, x-1, x+1)$ is fixed by $\sigma$ for every $1 \le x \le b-1$. In particular, $(x-1)^{\sigma_i}=x-1$ for every $1 \le x \le b-1$ and every $2 \le i \le \ell+1$. This forces every $\sigma_i$ ($1\le i \le \ell+2$) to be the identity, meaning that $\sigma$ is the identity.
\end{proof}

\section{Base constructions when $b\ge 3$}\label{sec:basege3}

In this section, we construct a number of sets that fulfill the conditions of Lemma~\ref{main-lemma} under various conditions, thus producing upper bounds for $\bz(a,b)$. 
\begin{cor}\label{cor:maincase}
Let $b$ and $\ell$ be integers with $b\ge 3$ and $\ell \ge 1$. If $1 \le k \le b-2$ and either $r=0$ or $2 \le r \le b^\ell-2$, or if $k=b-1$ and $r=0$, then $\bz(kb^\ell+r,b) \le \ell+2$.
\end{cor}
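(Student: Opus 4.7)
The plan is to apply Lemma~\ref{main-lemma}: for each admissible $(a,b,\ell)$, I must exhibit a subset $N\subseteq\Delta^{\ell+2}$ of cardinality $ab$ satisfying conditions (1)--(5) of that lemma. Since most of these conditions are controlled by the ``profile'' of $N$ in its first and last coordinates, I would design that profile first and only afterwards fix the middle $\ell$ coordinates.

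Concretely, let $n_{x,z}$ denote the number of elements of $N$ with first coordinate $x$ and last coordinate $z$, and look for a circulant profile $n_{x,x+i}=v_i$. Any circulant profile automatically has equal row and column sums $v_0+\cdots+v_{b-1}$, so this is condition (1) for the first and last coordinates provided the common value is $a$. Conditions (2) and (3) force $v_1=1$ (so the lone ``last $=$ first $+1$'' slot of each row is filled by the $T$-element), condition (4) becomes $v_0=c$ with $c\neq 1$, and condition (5) becomes $v_i\notin\{1,c\}$ for $2\le i\le b-1$; the later requirement of choosing $v_i$ distinct middle vectors in $\Delta^\ell$ imposes $c,v_i\le b^\ell$. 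Thus the task reduces to realising
\[a-1=c+v_2+\cdots+v_{b-1}\]
with each summand in $\{0,2,3,\ldots,b^\ell\}$ and each $v_i\neq c$.

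The central step is a short case analysis producing such a tuple. For $(k,r)=(b-1,0)$ the cardinality bound essentially forces $c=b^\ell-1$ and $v_2=\cdots=v_{b-1}=b^\ell$. For $r=0$ and $1\le k\le b-2$, rewrite $a-1=(k-1)b^\ell+(b^\ell-1)$: take $k-1$ of the $v_i$ equal to $b^\ell$, one equal to $e$, the remaining $b-k-2$ equal to zero, and solve $c+e=b^\ell-1$ with $c\in\{2,\ldots,b^\ell\}$ and $e\in\{0,2,\ldots,b^\ell\}\setminus\{c\}$. For $2\le r\le b^\ell-2$ and $1\le k\le b-2$, use $k$ copies of $b^\ell$ among the $v_i$ (or $k-1$ when $k=b-2$ leaves no spare slot) and split the residue $r-1$ (resp.\ $b^\ell+r-1$) between $c$ and one (resp.\ two) additional $v_i$. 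The boundary sub-cases---chiefly $r=2$, where the naive split yields $c+e=1$, and small values of $b^\ell$---are resolved by trading a single unit between $c$ and one of the $v_i$; the hypothesis $b^\ell\ge 3$ always leaves enough integers in $\{0,2,\ldots,b^\ell\}\setminus\{1,c\}$ for this to succeed.

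With the profile fixed, I would finish by choosing, for every $(x,i)$ with $v_i>0$, a set $M_{x,i}\subseteq\Delta^\ell$ of size $v_i$ to serve as the middle coordinates of the corresponding elements of $N$; distinct pairs $(i,m)$ with $m\in M_{x,i}$ automatically give distinct elements. When $v_i=b^\ell$ we are forced to take $M_{x,i}=\Delta^\ell$, contributing exactly $b^{\ell-1}$ copies of every symbol to every middle column; in the smaller slots we have slack. The $T$-elements already contribute a fixed pattern to each middle column (symbol $0$ twice, each of $1,\ldots,b-2$ once, and $b-1$ not at all), and the natural remedy is to arrange that the middles \emph{omitted} from the $v_0$-slots $M_{x,0}$ reproduce this same pattern in every column; picking $(0,\ldots,0)$ as the omission for $x=0$ and $(x-1,\ldots,x-1)$ for $x\neq 0$ does the trick. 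The main obstacle is this combined bookkeeping, together with the case analysis in the preceding paragraph; once a valid $N$ is produced, conditions (1)--(5) hold by construction and Lemma~\ref{main-lemma} yields $\bz(a,b)\le\ell+2$.
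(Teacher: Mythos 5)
Your proposal is correct and takes essentially the same route as the paper: reduce to Lemma~\ref{main-lemma}, stratify $N$ by the difference between last and first coordinate, use $T$ together with the complement of the pattern $T'$ (your omissions $(0,\ldots,0)$ and $(x-1,\ldots,x-1)$) for the difference-$0$ class, fill the intermediate classes completely with $\Delta^{\ell+1}$, and realise the remaining partial class in a column-balanced way (the paper does this with the translates $X^{\rho^{n_1}}$ of a fixed $r$-subset $X$). The only difference is bookkeeping: the paper fixes the single profile $c=b^\ell-1$, $v_t=b^\ell$ for $2\le t\le k$, $v_{k+1}=r$ throughout, which works uniformly under the stated hypotheses and avoids the boundary cases (such as $r=2$) that your default split creates and then has to patch.
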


\begin{proof}
We will find a subset $N$ of cardinality $ab$ of $\Delta^{\ell+2}$ and we show that $N$ satisfies all of the conditions of Lemma~\ref{main-lemma}, which completes the proof. 

Let $$T'=\{(\underbrace{0,\ldots, 0}_{\ell+1\textrm{ times}})\} \cup \{(x,\underbrace{x-1, \ldots, x-1}_{\ell \textrm{ times}}): 1 \le x \le b-1\} \subseteq \Delta^{\ell+1}.$$ Let $X$ be any subset of cardinality $r$ of $\{0\}\times\Delta^{\ell}$  containing neither $(0,\ldots, 0)$ nor $(0, b-1,\ldots, b-1)$. Such a subset exists because $|\{0\}\times \Delta^\ell|=b^\ell\ge r+2$. Let $\rho \in (\sym{\Delta})^{\ell+2}$ be the permutation that adds $1$ to every coordinate.

We let $M_1=T$ as defined in~\eqref{eq:T}, and $$M_0=\{(n_1, \ldots, n_{\ell+1},n_1): (n_1, \ldots, n_{\ell+1}) \in \Delta^{\ell+1}\setminus T'\}\subseteq \Delta^{\ell+2}.$$ Also, for $2 \le t \le k$, take
$$M_t=\{(n_1, \ldots, n_{\ell+1},n_1+t): (n_1, \ldots, n_{\ell+1})\in \Delta^{\ell+1}\}\subseteq \Delta^{\ell+2}$$ and let
$$M_{k+1}=\{(n_1, \ldots, n_{\ell+1},n_1+k+1): (n_1, \ldots, n_{\ell+1})\in X^{\rho^{n_1}}, 0 \le n_1 \le b-1\}\subseteq \Delta^{\ell+2}.$$ Notice that, regardless of $t$, 
\begin{center}
$(\dag):\qquad$ for any element in $M_t$, subtracting its first coordinate from its final coordinate yields $t$.
\end{center} We use this important property in what follows.

Now let $$N=\bigcup_{t=0}^{k+1}M_t.$$   
Note that $|M_1|=b$, and $|M_0|=b^{\ell+1}-b$, while $|M_t|=b^{\ell+1}$ for $2 \le t \le k$, and $|M_{k+1}|=br$. Using~$(\dag)$ and noting $k+1<b$ unless $r=0$, it is easy to see  that these sets are disjoint, so $$|N|=b+(b^{\ell+1}-b)+(k-1)b^{\ell+1}+br=kb^{\ell+1}+br=ab.$$ 
We verify the conditions of Lemma~\ref{main-lemma}.

To verify \eqref{condition:partition} of Lemma~\ref{main-lemma}, observe that in each of $M_0 \cup M_1$ and $M_t$ with $2 \le t \le k$, there are $b^\ell$ elements that have $i$ in the $j$th coordinate, while $M_{k+1}$ has $r$ elements that have $i$ in the $j$th coordinate. 
Part~\eqref{condition:T} of Lemma~\ref{main-lemma} is obvious from our definition of $M_1=T$, and part~\eqref{condition:Tunique} follows from~$(\dag)$. Part~\eqref{condition:constant-equality} follows from our construction of $M_0$, with $c=b^\ell-1$. 

We now verify~\eqref{condition:no1s} of Lemma~\ref{main-lemma}. For any $x \in \Delta$, the number of elements of $N$ satisfying the property defined in~\eqref{condition:no1s} is the number of elements of $M_i$ whose first coordinate is $x$. We have just observed that this is $b^\ell-1$ when $i=0$, $1$ when $i=1$, $b^\ell$ when $2 \le i \le k$, and $r$ when $i=k+1$. Since $1$, $b^\ell$, $r$, and $b^{\ell}-1$ are all distinct, the proof is complete.
\end{proof}

Next we deal with the situation where $a=kb^\ell+1$ with $1 \le k \le b-2$. We will require the additional hypothesis that $b^\ell>4$ in this result.

\begin{cor}\label{cor:r=1}
Let $b$ and $\ell$ be integers with $b\ge 3$, $\ell\ge 1$ and $b^\ell>4$. If $1 \le k \le b-2$, then $\bz(kb^\ell+1,b) \le \ell+2$.
\end{cor}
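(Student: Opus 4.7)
The strategy is to exhibit a subset $N\subseteq \Delta^{\ell+2}$ of cardinality $ab$ satisfying the five hypotheses of Lemma~\ref{main-lemma}, and then invoke that lemma. The construction of Corollary~\ref{cor:maincase} is unavailable when $r=1$: its $M_{k+1}$ would place exactly one element per first-coordinate class at offset $k+1$, which violates hypothesis~\eqref{condition:no1s} (the count at any offset $i\in\Delta\setminus\{0,1\}$ must be neither $1$ nor $c$).

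The plan is to redistribute: place $2$ elements at offset $k+1$ per first-coordinate class, and remove one additional element from the diagonal $M_0$ per first-coordinate class. This preserves the total count of $a=kb^\ell+1$ elements per first-coordinate class, since $(b^\ell-2)+1+(k-1)b^\ell+2=kb^\ell+1$, and replaces the constant of hypothesis~\eqref{condition:constant-equality} by $c=b^\ell-2$. The non-trivial offset counts are then $b^\ell$ (at offsets $2,\ldots,k$) and $2$ (at offset $k+1$); hypothesis~\eqref{condition:no1s} accordingly demands $b^\ell\ne 1$, $b^\ell\ne c$, $2\ne 1$ and $2\ne c$, which collapse to $b^\ell\ne 3$ and $b^\ell\ne 4$. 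This is exactly why the hypothesis $b^\ell>4$ is imposed.

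Concretely, keep $M_1=T$ and $M_2,\ldots,M_k$ as in the proof of Corollary~\ref{cor:maincase}. Let $S'=\{(x,x+1,\ldots,x+1):x\in\Delta\}\subseteq \Delta^{\ell+1}$; this has one element per first coordinate, is disjoint from $T'$ because $b\ge 3$, and in each of coordinates $2,\ldots,\ell+1$ every value of $\Delta$ appears in $S'$ exactly once. Redefine
$$
M_0=\{(n_1,\ldots,n_{\ell+1},n_1):(n_1,\ldots,n_{\ell+1})\in \Delta^{\ell+1}\setminus(T'\cup S')\}.
$$
Redefine $M_{k+1}$ exactly as in Corollary~\ref{cor:maincase} but with a subset $Y\subseteq \{0\}\times \Delta^{\ell}$ of cardinality $2$ containing neither $(0,\ldots,0)$ nor $(0,b-1,\ldots,b-1)$; such a $Y$ exists since $b^\ell\ge 5$. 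Set $N=\bigcup_{t=0}^{k+1}M_t$.

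The verifications of hypotheses~\eqref{condition:T}--\eqref{condition:no1s} follow from the offset bookkeeping sketched above. The main obstacle, and the only genuinely new work relative to Corollary~\ref{cor:maincase}, is~\eqref{condition:partition}: one must check coordinate-by-coordinate that the imbalance introduced in coordinates $2,\ldots,\ell+1$ by removing $S'$ from the diagonal is exactly compensated by the two rotated copies of $Y$ contributed by $M_{k+1}$. The balanced choice of $S'$ removes one copy of each value of $\Delta$ from each such coordinate of $M_0$, while the rotational structure of $Y^{\rho^{n_1}}$ deposits two copies of each value of $\Delta$ in each such coordinate of $M_{k+1}$; combined with $M_2,\ldots,M_k$ and the $T$-compensation already present in Corollary~\ref{cor:maincase}, this yields exactly $a$ copies of each value in each coordinate, completing the proof via Lemma~\ref{main-lemma}.
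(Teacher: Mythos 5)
Your proposal is correct and follows essentially the same route as the paper: both apply Lemma~\ref{main-lemma} to the construction of Corollary~\ref{cor:maincase} modified so that the diagonal layer $M_0$ loses a second rotation-orbit while two rotation-orbits are placed at offset $k+1$, giving $c=b^\ell-2$ and offset counts $b^\ell$ and $2$, which is exactly where the hypothesis $b^\ell>4$ enters (note only that $b^\ell\ne 3$ is really the requirement $c\ne 1$ of condition~\eqref{condition:constant-equality}, not of~\eqref{condition:no1s}). The one cosmetic difference is that the paper removes the rotations of a chosen $x_1$ and re-inserts that same $x_1$ together with a second element $x_2$ at offset $k+1$, so condition~\eqref{condition:partition} is immediate because $M_0\cup M_1\cup M_{k+1,1}$ projects onto all of $\Delta^{\ell+1}$, whereas you remove the orbit $S'$ of $(0,1,\ldots,1)$ and insert an unrelated pair $Y$, which is why you need the small coordinate-balance computation you sketch --- and that computation checks out.
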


\begin{proof}
We will find a subset $N$ of $\Delta^{\ell+2}$ and show that $N$ satisfies all of the conditions of Lemma~\ref{main-lemma}, which completes the proof. 

Let $$T'=\{(\underbrace{0,\ldots, 0}_{\ell+1 \textrm{ times}})\} \cup \{(x,\underbrace{x-1, \ldots, x-1}_{\ell\textrm{ times}}): 1 \le x \le b-1\} \subseteq \Delta^{\ell+1}.$$ Let $X=\{x_1,x_2\}$ be any subset of cardinality $2$ of $\{0\}\times\Delta^{\ell}$  containing neither $(0,\ldots, 0)$ nor $(0, b-1,\ldots, b-1)$. Such a subset exists because $|\{0\}\times \Delta^\ell|=b^\ell>3$. Let $\rho \in (\sym{\Delta})^{\ell+2}$  and $\rho'\in (\sym{\Omega})^{\ell+1}$ be the permutations that add $1$ to every coordinate.

We let $M_1=T$ as defined in~\eqref{eq:T}, and 
\begin{align*}
M_0=\{(n_1, \ldots, n_{\ell+1},n_1):& (n_1, \ldots, n_{\ell+1}) \in\Delta^{\ell+1}\setminus T',\,(n_1,\ldots,n_{\ell+1})\ne x_1^{(\rho')^{n_1}},\,0\le n_1\le b-1\}.
\end{align*} Also, for $2 \le t \le k$, take
$$M_t=\{(n_1, \ldots, n_{\ell+1},n_1+t): (n_1, \ldots, n_{\ell+1})\in \Delta^{\ell+1}\}$$ and, for $j\in \{1,2\}$, let
$$M_{k+1,j}=\{(n_1, \ldots, n_{\ell+1},n_1+k+1): (n_1, \ldots, n_{\ell+1})=x_j^{\rho^{n_1}}, 0 \le n_1 \le b-1\}.$$ Notice that, regardless of $t$ and $j$, for any element in $M_t$ or $M_{t,j}$, subtracting its first coordinate from its final coordinate yields $t$.

Now let $$N=\bigcup_{t=0}^{k}M_t\cup M_{k+1,1}\cup M_{k+1,2}.$$ 
Note that $|M_1|=b$, and $|M_0|=b^{\ell+1}-2b$, while $|M_t|=b^{\ell+1}$ for $2 \le i \le k$, and $|M_{k+1,1}|=|M_{k+1,2}|=b$. It is easy to see (by our observation about the elements of $M_t$ and $M_{k+1,j}$, and noting $k+1<b$) that these sets are disjoint, so $$|N|=b+(b^{\ell+1}-2b)+(k-1)b^{\ell+1}+2b=kb^{\ell+1}+b=ab.$$ 

We verify the conditions of Lemma~\ref{main-lemma}.  
To verify \eqref{condition:partition} of Lemma~\ref{main-lemma}, observe that in each of $M_0 \cup M_1\cup M_{k+1,1}$ and $M_t$ with $2 \le t \le k$, there are $b^\ell$ elements that have $i$ in the $j$th coordinate, while $M_{k+1,2}$ has $1$ element that has $i$ in the $j$th coordinate. In fact, $b^\ell+(k-1)b^\ell+1=a$ and hence~\eqref{condition:partition} is verified.

%
%

Part~\eqref{condition:T} of Lemma~\ref{main-lemma} is obvious from our definition of $M_1=T$, and part~\eqref{condition:Tunique} follows from our observation about the sets $M_t$ and $M_{k+1,j}$. Part~\eqref{condition:constant-equality} follows from our construction of $M_0$, with $c=b^\ell-2$. As $b^\ell>4$, we have $c\ne 1$.

We now verify~\eqref{condition:no1s} of Lemma~\ref{main-lemma}.  For any $x \in \Delta$, the number of elements of $N$ satisfying the property defined in~\eqref{condition:no1s} is the number of elements of $M_t$ (or in the union of $M_{k+1,1}$ and $M_{k+1,2}$) whose first coordinate is $x$. We have just observed that this is $b^\ell-2$ when $i=0$, $b^\ell$ when $2 \le i \le k$, $2$ when $i=k+1$, and it is $1$ when $i=1$. Since $b^\ell>4$, these values are all distinct, so this condition is satisfied.
\end{proof}

In the next result  we deal with the possibility that $3 \le a \le b^\ell$. 

\begin{cor}\label{cor:small}
Let $b$ and $\ell$ integers with $b\ge 3$ and $\ell \ge 1$. If  $3 \le a\le b^\ell$, then $\bz(a,b) \le \ell+2$.
\end{cor}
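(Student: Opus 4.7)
The plan is to apply Lemma~\ref{main-lemma} with an explicit set $N \subseteq \Delta^{\ell+2}$ of cardinality $ab$. I would take $N := T \cup N_0$, where $T$ is the set from~\eqref{eq:T} and $N_0$ consists of $a - 1$ elements with last coordinate equal to first, for each value of the first coordinate. Writing $N_0 = \{(x, y_1, \ldots, y_\ell, x) : x \in \Delta,\ (y_1, \ldots, y_\ell) \in A_x\}$ for subsets $A_x \subseteq \Delta^\ell$ of size $a - 1$, we immediately get $|N| = ab$, and $T$ and $N_0$ are automatically disjoint (a $T$-element has last coordinate equal to first plus $1$, while an $N_0$-element has last equal to first). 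Setting $c := a - 1$, condition~\eqref{condition:constant-equality} holds with $c \neq 1$ since $a \geq 3$. Conditions~\eqref{condition:T} and~\eqref{condition:Tunique} are immediate, and $m_{x, x+d} = 0 \notin \{1, c\}$ for every $d \in \Delta \setminus \{0, 1\}$ settles~\eqref{condition:no1s}.

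The substantive work is choosing the $A_x$'s so as to satisfy~\eqref{condition:partition} at the $\ell$ middle coordinates. At each such coordinate, $T$ contributes $2, 1, \ldots, 1, 0$ to the value counts $0, 1, \ldots, b-1$, so the $A_x$'s must collectively contribute $a-2, a-1, \ldots, a-1, a$. My construction: let $s := b^\ell - a + 1$, pick any $B' \subseteq \Delta^\ell \setminus \{(0, \ldots, 0), (b-1, \ldots, b-1)\}$ with $|B'| = s - 1$ (feasible since $0 \leq s - 1 \leq b^\ell - 2$, using $2 \leq a \leq b^\ell$ and $b^\ell \geq 3$), and set
\[
A_x := \Delta^\ell \setminus \bigl(\{D_x\} \cup (B' + (x, \ldots, x))\bigr),
\]
where $D_0 := (0, \ldots, 0)$ and $D_x := (x - 1, \ldots, x - 1)$ for $1 \leq x \leq b - 1$. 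The exclusions in $B'$ guarantee that $D_x$ is not in the cyclic shift of $B'$, so $|A_x| = a - 1$ as required.

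The key calculation (and the main obstacle) is verifying that this construction produces exactly the required imbalance $(a - 2, a - 1, \ldots, a - 1, a)$ in the $A_x$-contributions to each middle coordinate. The point is that the cyclic translations $B' + (x, \ldots, x)$ distribute evenly across values: for each fixed middle coordinate and each value $v$, translation by $(x, \ldots, x)$ realizes a bijection between the value-$v$ elements of the shifted copy and the value-$(v-x)$ elements of $B'$, so summing over $x \in \Delta$ contributes $|B'| = s - 1$ to every value. Meanwhile, the singletons $D_x$ contribute the asymmetric pattern $(2, 1, \ldots, 1, 0)$ to values $(0, 1, \ldots, b-1)$; together these complement the $T$-pattern exactly so that every value appears $a$ times at every middle coordinate. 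The rest of the verification is routine from the construction, and Lemma~\ref{main-lemma} then gives $\bz(a, b) \leq \ell + 2$.
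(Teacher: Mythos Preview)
Your proof is correct and follows essentially the same strategy as the paper: apply Lemma~\ref{main-lemma} with $N=T\cup N_0$, where every element of $N_0$ has last coordinate equal to its first, and where the middle block attached to first coordinate $x$ consists of a single ``anchor'' tuple (chosen to offset the asymmetry of $T$ at the middle coordinates) together with the $x$-shift of a fixed subset of $\Delta^\ell$. The only difference is cosmetic: the paper builds each middle block additively as $\{v_x\}\cup X^{\rho^x}$ with $|X|=a-2$, whereas you build it subtractively as $\Delta^\ell\setminus(\{D_x\}\cup(B'+x\cdot\mathbf 1))$ with $|B'|=b^\ell-a$; these are complementary descriptions of the same construction and the verifications of conditions~\eqref{condition:partition}--\eqref{condition:no1s} proceed identically.
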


\begin{proof}
We will find a subset $N$ of $\Delta^{\ell+2}$ and show that $N$ satisfies all of the conditions of Lemma~\ref{main-lemma}. 

Let $$V=\{(0,
\underbrace{b-1,\ldots, b-1}_{\ell \textrm{ times}})\} \cup \{(\underbrace{x,\ldots, x}_{\ell+1 \textrm{ times}}): 1 \le x \le b-1\} \subseteq \Delta^{\ell+1}.$$
Let $X$ be any subset of cardinality $a-2$ of $\{0\}\times\Delta^{\ell}$  containing neither $(0,\ldots, 0)$ nor $(0, b-1,\ldots, b-1)$. Such a subset exists because $|\{0\}\times \Delta^\ell|=b^\ell\ge a$. Let $\rho \in (\sym{\Delta})^{\ell+2}$ be the permutation that adds $1$ to every coordinate.

We let $M_1=T$ as defined in~\eqref{eq:T}, and $$M_0=\{(n_1, \ldots, n_{\ell+1},n_1): (n_1, \ldots, n_{\ell+1}) \in V \cup X^{\rho^{n_1}},\,0\le n_1\le b-1\}.$$  Notice that, for any element in $M_t$, subtracting its first coordinate from its final coordinate yields $t$.

Now let $N=M_0\cup M_1$. 
Note that $|M_1|=b$, and $|M_0|=b+b(a-2)=b(a-1)$. It is easy to see (by our observation about the elements of $M_t$) that these sets are disjoint, so $|N|=ab.$

We verify the conditions of Lemma~\ref{main-lemma}.  
To verify \eqref{condition:partition} of Lemma~\ref{main-lemma}, notice first that in $T\subseteq \Delta^{\ell+2}$ and in $V\subseteq \Delta^{\ell+1}$, every element of $\Delta$ appears exactly twice in each of the first $\ell+1$ coordinates (which are all of the coordinates for $V$). 
Also, in the final coordinate each element of $\Delta$ appears exactly once in $T$, and exactly once in the portion of $M_0$ that is derived from $V$. In the portion of $M_0$ that is derived from $X$, since the action of $\rho$ takes each coordinate through every element of $\Delta$ for each initial element in $X$, every element appears $a-2$ times. Thus every element of $\Delta$ appears $a$ times altogether in each coordinate.
This completes the verification of \eqref{condition:partition} of Lemma~\ref{main-lemma}.

Part~\eqref{condition:T} of Lemma~\ref{main-lemma} is obvious from our definition of $M_1=T$, and part~\eqref{condition:Tunique} follows from our observation about the first and last coordinates in the elements of $M_0$. Part~\eqref{condition:constant-equality} follows from our construction of $M_0$, with $c=1+a-2=a-1\ge 2$.

We now verify~\eqref{condition:no1s} of Lemma~\ref{main-lemma}. Let $i\in \Delta\setminus\{0,1\}$.  For any $x \in \Delta$, the number of elements $(x,n_2,\ldots,n_{\ell+2})\in N$  with $n_{\ell+2}=x+i$ is zero because of the definition of $M_0$ and $M_1$. As $c=a-1\ne 0$, condition~\eqref{condition:no1s} is satisfied.
\end{proof}

\section{Base sizes when $b \ge 3$}\label{sec:basesizege3}

We begin with a result that allows us to determine some base sizes if we know others.

\begin{lem}\label{lem:complement}
Let  $a'$ and  $b$ be integers with $a',b\ge 2$. If there exists an integer $\kappa$ with
$$\bz(a',b)\le \kappa\le \lceil \log_b(b^{\kappa}-a'b)\rceil,$$
then $\bz(b^{\kappa-1}-a',b)=\kappa$.
\end{lem}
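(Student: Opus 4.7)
The plan is to set $a := b^{\kappa-1} - a'$, so that $|\Omega| = ab = b^\kappa - a'b$ and the hypothesis becomes
$$\bz(a',b)\le \kappa \le \lceil \log_b(ab)\rceil,$$
and then to establish the matching inequalities $\bz(a,b)\ge \kappa$ and $\bz(a,b)\le \kappa$.

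For the lower bound I would appeal directly to Lemma~\ref{lower-bound}. The standing assumption $a'\ge 2$ gives $a+2 \le b^{\kappa-1}$, while the right-hand hypothesis $\kappa\le \lceil\log_b(ab)\rceil$ translates to $ab>b^{\kappa-1}$, hence $a+2>a>b^{\kappa-2}$. These two estimates pin $\lceil\log_b(a+2)\rceil=\kappa-1$, and Lemma~\ref{lower-bound} then yields $\bz(a,b)\ge \kappa$.

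For the upper bound I would use a \emph{complement construction}, the whole point being that the arithmetic $b^\kappa-a'b=ab=|\Omega|$ lets one realize $\Omega$ as a complement inside $\Delta^\kappa$. Concretely: fix a set $\Omega'$ of cardinality $a'b$ together with a base $\Sigma'_1,\ldots,\Sigma'_{\bz(a',b)}$ of $(a',b)$-regular partitions of $\Omega'$ for $\sym{\Omega'}$. By Remark~\ref{rempablopablo} this is the same as an embedding $\Omega'\hookrightarrow \Delta^{\bz(a',b)}$ whose setwise stabilizer in $(\sym{\Delta})^{\bz(a',b)}$ is trivial. I would pad this to an embedding $\Omega'\hookrightarrow \Delta^{\kappa}$ by taking the $\kappa-\bz(a',b)$ extra coordinates to each be an exact copy of the first one; the padded embedding is still injective and each of the $\kappa$ coordinates is still $(a',b)$-regular. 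Setting $\Omega:=\Delta^\kappa\setminus \Omega'$ produces a set of cardinality $b^\kappa - a'b=ab$ in which, for every coordinate and every $x\in \Delta$, the element $x$ appears $b^{\kappa-1}-a'=a$ times, so $\Omega$ carries $\kappa$ $(a,b)$-regular partitions of itself.

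To finish I need the setwise stabilizer of $\Omega$ in $(\sym{\Delta})^\kappa$ to be trivial, because by Remark~\ref{rempablopablo} this gives $\bz(a,b)\le \kappa$. Since fixing $\Omega$ setwise is the same as fixing $\Omega'$ setwise, given $\sigma=(\sigma_1,\ldots,\sigma_\kappa)$ that fixes $\Omega'$ I would first project onto the first $\bz(a',b)$ coordinates: the projection of $\Omega'$ is the original base image, so the original triviality forces $\sigma_i=\mathrm{id}$ for every $i\le \bz(a',b)$. For a padded coordinate $i>\bz(a',b)$, every element of $\Omega'$ has the shape $(x_1,\ldots,x_{\bz(a',b)},x_1,\ldots,x_1)$, and $(a',b)$-regularity guarantees that every $x_1\in \Delta$ does occur as some first coordinate; the condition $\sigma(\Omega')=\Omega'$ then forces $x_1^{\sigma_i}=x_1$ for every $x_1\in \Delta$, i.e.\ $\sigma_i=\mathrm{id}$. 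Combined with the lower bound this gives $\bz(a,b)=\kappa$. I do not anticipate a genuine obstacle: the whole argument rests on the arithmetic identity $b^\kappa-a'b=ab$ and on the trivial observation that padding a base by copying one of its coordinates preserves both regularity and stabilizer triviality.
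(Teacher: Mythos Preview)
Your proof is correct and the upper bound via the complement construction with coordinate padding is identical to the paper's. For the lower bound the paper argues more directly---from the element count $b^{\bz(a,b)}\ge ab$ together with the observation that the hypothesis forces $\lceil\log_b(ab)\rceil=\kappa$---whereas you route through Lemma~\ref{lower-bound} and a computation of $\lceil\log_b(a+2)\rceil$; both arguments work.
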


\begin{proof}
Notice  that $\kappa=\log_b(b^\kappa) \ge \lceil \log_b(b^{\kappa}-a'b)\rceil$ and hence the second inequality in the hypothesis of this lemma yields $$\lceil \log_b(b^{\kappa}-a'b)\rceil= \kappa.$$

Let $\Omega_2$ be a set of cardinality $b(b^{\kappa-1}-a')$. Given a base of cardinality $\bz(b^{\kappa-1}-a',b)$ for the action of $\sym{\Omega_2}$ on $(b^{\kappa-1}-a',b)$-regular partitions, the set $\Omega_2$ can be identified with a certain subset of $\Delta^{\bz(b^{\kappa-1}-a',b)}$. However, in order to have sufficient elements in $\Delta^{\bz(b^{\kappa-1}-a',b)}$ to be in one-to-one correspondence with the elements of $\Omega_2$, we must have $$b^{\bz(b^{\kappa-1}-a',b)} \ge b^{\kappa}-a'b.$$ Equivalently, $\bz(b^{\kappa-1}-a',b) \ge \kappa$ by the observation in the previous paragraph.

To complete the proof, we construct a base of cardinality $\kappa$ for the action of $\sym{\Omega_2}$ on the set of $(b^{\kappa-1}-a',b)$-regular partitions. 

Let $\Omega_1$ be a set of cardinality $a'b$. Now, given a base of cardinality $\bz(a',b)$ for the action of $\sym{\Omega_1}$ on $(a',b)$-regular partitions, we obtain an embedding of $\Omega_1$ in $\Delta^{\bz(a',b)}$. Let $N_1\subseteq \Delta^{\bz(a',b)}$ be the image of this embedding and let $$N_1':=\{(n_1,\ldots, n_{\bz(a',b)},\underbrace{n_1,\ldots, n_1}_{\kappa-\bz(a',b) \textrm{ times}}):(n_1,\ldots,n_{\bz(a',b)})\in N_1\}.$$
Since every element of $\Delta$ appears $a'$ times in each coordinate among elements of $N_1$, every element will appear $a'$ times in each coordinate among elements of $N_1'$. Now take $N_2:=\Delta^{\kappa}\setminus N_1'$. Again, it is clear that each element of $\Delta$ will appear $b^{\kappa-1}-a'$ times in each coordinate. Moreover, $|N_2|=b^{\kappa}-|N_1'|=b(b^{\kappa-1}-a')=|\Omega_2|$. In particular, from Remark~\ref{rempablopablo}, we need to show that, if a permutation $\sigma \in (\sym{\Delta})^{\kappa}$ fixes $N_2$ setwise, then $\sigma$ must be the identity. 

If some $\sigma \in (\sym{\Delta})^{\kappa}$ fixes $N_2$ setwise, then it also fixes $N_1'$ setwise. Therefore, its projection onto the first $\bz(a',b)$ coordinates is a permutation in $(\sym{\Delta})^{\bz(a',b)}$ that fixes $N_1$ setwise. However, from Remark~\ref{rempablopablo}, the setwise stabilizer of $N_1$ in $(\sym{\Delta})^{\bz(a',b)}$ is the identity  and hence $\sigma$ must act as the identity on the first $\bz(a',b)$ coordinates. By our construction of $N_1'$, the action of $\sigma$ on each of the final $\kappa-\bz(a',b)$ coordinates must be identical to its action on the first coordinate, which is the identity. So $\sigma$ must be the identity.
\end{proof}

We pull our results together to prove Theorem~\ref{thrm:main} when $b \ge 3$.

\begin{proof}[Proof of 
Theorem~$\ref{thrm:main}$ parts~$\eqref{eqthrm:main1}$,~$\eqref{eqthrm:main3}$ and~$\eqref{eqthrm:main4}$.]
When $a=2$, the result follows directly from Theorem~\ref{thm:a<b}. For the rest of the proof, we suppose $a\ge 3$.

Let $\ell=\lfloor \log_b(a)\rfloor\ge 0$.
Then we can write $a$ as $kb^\ell+r$, for some $1 \le k \le b-1$ and some $0 \le r \le b^\ell-1$. 

If $\ell=0$ or if $\ell=1$ and $(k,r)=(1,0)$, then $a\le b$ and the result follows from Theorem~\ref{thm:a<b}. Henceforth we may assume that $\ell \ge 1$ and, when $\ell=1$, $(k,r)\ne (1,0)$, that is, $$a > b.$$

Observe that, from Lemma~\ref{lower-bound}, we have
$\bz(a,b)\ge \lceil\log_b(a+2) \rceil+1.$

If $b^\ell\le 4$, then using $\ell=\lfloor \log_b(a)\rfloor$ and $b \ge 3$, we deduce that either  $b=3$ and $4 \le a\le 8$, or $b=4$ and $5 \le a \le 15$. When $(a,b)\in \{(6,3),(6,4),(8,4),(10,4),(12,4)\}$, we deduce from Corollary~\ref{cor:maincase} that
$\bz(a,b)\le 3=\lceil\log_b(a+2)\rceil+1$ and hence the result follows from Lemma~\ref{lower-bound}. When $(a,b)\in \{(8,3),(15,4)\}$, as $a\le b^2$, we deduce from Corollary~\ref{cor:small} that
$\bz(a,b)\le 2+2= 4=\lceil\log_b(a+2)\rceil+1$ and hence the result follows from Lemma~\ref{lower-bound}. When $(a,b)\in \{(4,3),(5,3),(7,3),(5,4),(7,4),(9,4)\}$, we have verified with the computer algebra system magma~\cite{magma} that
\begin{align*}
\bz(4,3)&=3=\lceil\log_3(4+2)\rceil+1,\,\,
\bz(5,3)=3=\lceil\log_3(5+2)\rceil+1,\\
\bz(7,3)&=4=\lceil\log_3(7+2)\rceil+2\,\, \textrm{(which is one of the exceptions in Theorem~\ref{thrm:main}~\eqref{eqthrm:main3})},\\
\bz(5,4)&=3=\lceil\log_4(5+2)\rceil+1,\,\,
\bz(7,4)=3=\lceil\log_4(7+2)\rceil+1,\,\, \bz(9,4)=3=\lceil\log_4(9+2)\rceil+1.
\end{align*}
 It remains to consider $(a,b)\in \{(11,4),(13,4),(14,4)\}$. When $(a,b)=(11,4)$, we apply Lemma~\ref{lem:complement} with $a':=5$ and $\kappa:=3$ (because $\bz(5,4)=3$ and $\lceil \log_4(4^3-5\cdot 4)\rceil=3$) and we obtain $\bz(11,4)=3=\lceil\log_4(11+2)\rceil+1$. When $(a,b)=(13,4)$, we apply Lemma~\ref{lem:complement} with $a':=3$ and $\kappa:=3$ (because $\bz(3,4)=3$ from Theorem~\ref{thm:a<b} and $\lceil \log_4(4^3-3\cdot 4)\rceil=3$) and we obtain $\bz(13,4)=3=\lceil\log_4(13+2)\rceil+1$. When $(a,b)=(14,4)$, we apply Lemma~\ref{lem:complement} with $a':=2$ and $\kappa:=3$ (because $\bz(2,4)=3$ from Theorem~\ref{thm:a<b} and $\lceil \log_4(4^3-2\cdot 4)\rceil=3$) and we obtain $\bz(14,4)=3=\lceil\log_4(14+2)\rceil+1$.

 Henceforth we may assume that $b^\ell>4$ and, in particular, we may apply also Corollary~\ref{cor:r=1}. 
We now divide the proof depending on whether $k\ne b-1$, or $k=b-1$.

\smallskip

\noindent\textsc{Case $k\ne b-1$.} Since $b^\ell>4$, this implies $b^\ell<a+2<b^{\ell+1}-3$, and hence $\lceil \log_b(a+2)\rceil=\ell+1$. Thus our aim in this case will be to prove that $\bz(a,b)=\ell+2$.

If $ r \notin\{ 1, b^\ell-1\}$, then by Corollary~\ref{cor:maincase} we have $\bz(a,b)=\bz(kb^\ell+r,b) \le \ell+2$. Furthermore, by Lemma~\ref{lower-bound}, we have $\bz(a,b) \ge \ell+2$. Therefore, $\bz(a,b)=\ell+2$, as claimed. 

If $r=1$, then by Corollary~\ref{cor:r=1} we have $\bz(a,b)=\bz(kb^\ell+1,b) \le \ell+2$. Furthermore,  by Lemma~\ref{lower-bound}, we have $\bz(a,b) \ge \ell+2$. Therefore, $\bz(a,b)=\ell+2$, as claimed. 

When $r=b^\ell-1$, we apply Lemma~\ref{lem:complement}. Let $\kappa:=\ell+2$ and $a':=(b-k-1)b^\ell+1$. Notice that, by the previous paragraph, we have $\bz(a',b)=\bz((b-k-1)b^\ell+1,b)=\ell+2=\kappa$, because $1 \le b-k-1 \le b-2$. In particular, the first inequality in the hypothesis of Lemma~\ref{lem:complement} is satisfied (with equality). For the second inequality, $$b^{\ell+2}-a'b=b^{\ell+2}-((b-k-1)b^\ell+1)b=kb^{\ell+1}+b^{\ell+1}-b>b^{\ell+1},$$ since $k \ge 1$, so 
$\lceil\log_b(b^{\ell+2}-a'b)\rceil\ge \ell+2$, as required. Thus we may conclude that $\bz(b^{\ell+1}-((b-k-1)b^\ell+1),b)=\ell+2$. But $b^{\ell+1}-((b-k-1)b^\ell+1)=kb^\ell+(b^\ell-1)=a$. So we have established the desired result again.

\smallskip

\noindent\textsc{Case $k= b-1$.} If $r=0$, then Corollary~\ref{cor:maincase} again applies and, since $b \ge 3$, we have $\lceil \log_b(a+2)\rceil=\lceil\log_b(a)\rceil=\ell+1$. By Lemma~\ref{lower-bound}, we have $\bz(a,b) \ge \ell+2$. Therefore, $\bz(a,b)=\ell+2=\lceil \log_b(a+2)\rceil+1$, as claimed.

Now suppose that $r \ge 1$, and $a \le b^{\ell+1}-3$, so $b^{\ell+1}-b^{\ell}<a \le b^{\ell+1}-3$. 
Let $\kappa:=\ell+2$ and $a':=b^{\ell+1}-a$. We see that $3 \le a'<b^{\ell}$. Therefore, by Corollary~\ref{cor:small}, $\bz(a',b) \le \ell+2=\kappa$. Therefore, the first inequality in  Lemma~\ref{lem:complement} is satisfied. For the second inequality, notice that $b^{\ell+2}-a'b=b^{\ell+2}-(b^{\ell+1}-a)b=ab$, and $$\log_b(ab)>\log_b((b^{\ell+1}-b^\ell)b)=\log_b(b^{\ell+2}-b^{\ell+1})>\ell+1,$$ so $\lceil \log_b(b^{\ell+2}-a'b)\rceil \ge \ell+2=\kappa$, as required. Therefore by Lemma~\ref{lem:complement}, we see that $\bz(a,b)=\kappa=\ell+2$ again.

The remaining possibilities are $a=b^{\ell+1}-2$ and $a=b^{\ell+1}-1$. 
Suppose first $a=b^{\ell+1}-1$. In this case, $\lceil\log_b(a+2)\rceil=\ell+2$, so our aim is to prove that $\bz(a,b)=\ell+3$. That $\bz(a,b) \ge \ell+3$ follows from Lemma~\ref{lower-bound}.  We see that $3 \le a\le b^{\ell+1}$. 
Then Corollary~\ref{cor:small} (taking the $\ell$ of that result to be $\ell+1$) gives us $\bz(a,b) \le \ell+3$, as desired. 

Finally, suppose that $a=b^{\ell+1}-2$. In this case, $\lceil\log_b(a+2)\rceil=\ell+1$, so our aim is to prove that $\bz(a,b)=\ell+2$.  We apply Lemma~\ref{lem:complement} with $a'=2$ and $\kappa=\ell+2$. Notice that $b^{\ell+1}-2b>b^{\ell}$ since $\ell\ge 1$, so $\kappa=\ell+2 \le \lceil\log_b(b^{\ell+2}-2b)\rceil$ and the second inequality of Lemma~\ref{lem:complement} is satisfied.
By Theorem~\ref{thm:a<b}, we have $\bz(2,b)=4$ when $b=3$ and $\bz(2,b)=3$ when $b \ge 4$. Since $\ell \ge 1$, we have $\bz(2,b) \le \ell+2=\kappa$ and hence the first inequality in Lemma~\ref{lem:complement} is also satisfied.
\end{proof}

This result allows us to determine the corresponding result for $\bz'(a,b)$ in most cases. 

\begin{rem}\label{alt-easy}{\rm
We have $\bz'(a,b) \le \bz(a,b)$. This is because if $\Sigma$ is a collection of regular partitions that forms a base for the symmetric group, it must also form a base for the alternating group.}
\end{rem}

\begin{cor}\label{cor:cor}
Let $a$ and $b$ be integers with $a \ge 2$ and $b \ge 3$. Then one of the following holds
\begin{enumerate}
\item\label{cor:eq1} $\bz'(a,b)=\bz(a,b)$, or
\item\label{cor:eq2} $\bz'(a,b)=\bz(a,b)-1$ and $(a,b)\in \{(2,3),(3,6),(3,7),(4,7),(7,3)\}$, or $b=a+2$ and $a\ge 5$, or
\item\label{cor:eq4} $a=b^k-1$ for some $k \ge 2$.
\end{enumerate}
\end{cor}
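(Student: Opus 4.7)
The plan is to combine the upper bound $\bz'(a,b)\le\bz(a,b)$ of Remark~\ref{alt-easy} with the lower bound $\bz'(a,b)\ge\lceil\log_b(a+1)\rceil+1$ of Lemma~\ref{lower-bound-alt}, and with the explicit values of $\bz(a,b)$ from Theorem~\ref{thrm:main} (now established for $b\ge 3$). These bounds coincide, forcing $\bz'(a,b)=\bz(a,b)$ (i.e.\ case~(1)), precisely when $\bz(a,b)$ takes its generic value $\lceil\log_b(a+2)\rceil+1$ and $\lceil\log_b(a+1)\rceil=\lceil\log_b(a+2)\rceil$. The first equality fails only at the exceptions of Theorem~\ref{thrm:main} (namely $a=2$; the sporadic pairs $(3,6),(3,7),(4,7),(7,3)$; and $b=a+2$), while the second fails only when $a=b^k-1$ for some $k\ge 1$.

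Therefore every $(a,b)$ avoiding both failure modes lies in case~(1), and every pair with $a=b^k-1$ for some $k\ge 2$ lies in case~(3) by definition. The residual situations to analyse directly are: (i) $a=b-1$ with $b\ge 4$; (ii) $a=2$ with $b\ge 4$; (iii) the five sporadic pairs $(2,3),(3,6),(3,7),(4,7),(7,3)$; and (iv) $b=a+2$ with $a\ge 3$. The corollary places (i), (ii), and (iv) with $a\in\{3,4\}$ in case~(1), and places (iii) together with (iv) for $a\ge 5$ in case~(2).

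For the residual case~(1) pairs the lower bound from Lemma~\ref{lower-bound-alt} gives only $\bz'(a,b)\ge 2$, so one has to sharpen it by analysing the combined setwise stabilizer of $\Omega$ directly. In (i), I would identify $\Omega$ with the subset of $\Delta^2$ whose complement is the graph of a permutation $\tau\in\sym\Delta$; every pair $(\sigma,\tau\sigma\tau^{-1})$ with $\sigma\in\sym\Delta$ fixes $\Omega$ setwise, and a short parity count shows the induced permutation of $\Omega$ has the same sign as $\sigma$ (its sign on $\Delta^2$ equals $\mathrm{sgn}(\sigma)^{2b}$, while its sign on the $b$-element complement equals $\mathrm{sgn}(\sigma)$). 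Choosing $\sigma\in\alt\Delta\setminus\{1\}$, which is nonempty since $b\ge 4$, produces a nontrivial even element of the stabilizer and forces $\bz'(b-1,b)\ge 3=\bz(b-1,b)$. Analogous finite parity analyses of the combined setwise stabilizer handle (ii) and (iv) with $a\in\{3,4\}$.

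For the case~(2) pairs we must show $\bz'(a,b)=\bz(a,b)-1$. The sporadic list in (iii) is a direct computation (using Magma~\cite{magma}). For the infinite family $b=a+2$ with $a\ge 5$, one would construct two explicit $(a,a+2)$-regular partitions of $\Omega$ whose combined setwise stabilizer in $\alt\Omega$ is trivial, giving $\bz'(a,a+2)\le 2=\bz(a,a+2)-1$, with the matching lower bound supplied by Lemma~\ref{lower-bound-alt}. The main obstacle is this last construction: exhibiting a 2-partition base for $\alt\Omega$ in the family $b=a+2$, $a\ge 5$, requires a delicate, explicit combinatorial design that defeats not only $\sym\Omega$ stabilizers but also every nontrivial even stabilizing element, and by contrast the generic matching-bounds argument used in the other cases is immediate.
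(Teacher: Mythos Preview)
Your generic argument---combining Remark~\ref{alt-easy}, Lemma~\ref{lower-bound-alt}, and Theorem~\ref{thrm:main}---is exactly what the paper does for $a>b$. The divergence is in the residual cases (i)--(iv), which all satisfy $a\le b$ (except $(7,3)$). The paper does not argue these directly: it simply invokes~\cite[Remark~2.8]{B10}, which already records that $\bz'(a,b)=\bz(a,b)$ for all $a\le b$ except $(2,3),(3,6),(3,7),(4,7)$ and $b=a+2$ with $a\ge 5$, and that $\bz'(a,b)=\bz(a,b)-1$ in those exceptions. That single citation disposes of your (i), (ii), (iv), and four of the five sporadic pairs in (iii); only $(7,3)$ is then checked by computer.

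Your proposed direct route has real gaps. For (i), your parity argument only treats partitions satisfying~\eqref{uglyyy}; by Remark~\ref{remalt} a putative base for $\alt\Omega$ could instead satisfy~\eqref{uglyyyY}, and then $\Omega$ is not identified with a subset of $\Delta^2$ whose complement is the graph of a permutation, so the argument does not apply as stated. More seriously, for (iv) with $a\ge 5$ you concede that the required two-partition base for $\alt\Omega$ is a ``delicate, explicit combinatorial design'' that you have not produced; this is precisely the content of~\cite[Remark~2.8]{B10}, and without either that citation or an actual construction the corollary is not proved. The cases (ii) and (iv) with $a\in\{3,4\}$ are likewise only asserted to follow from ``analogous'' analyses. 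In short, the overall strategy is sound, but the residual cases should be handled by citing~\cite[Remark~2.8]{B10} rather than by ad hoc arguments you have not completed.
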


\begin{proof}
Suppose first that $a\le b$. From~\cite[Remark~$2.8$]{B10}, we have $\bz(a,b)=\bz'(a,b)$, unless $(a,b)\in \{(2,3),(3,6),(3,7),(4,7)\}$, or $b=a+2$ and $a\ge5$. In all of the exceptional cases, again from~\cite[Remark~$2.8$]{B10}, we  have $\bz'(a,b)=\bz(a,b)-1$. In particular, when $a\le b$, we see that either part~\eqref{cor:eq1} or~\eqref{cor:eq2} holds true. For the rest of the proof we suppose $a> b$.

When $(a,b)=(7,3)$, we have verified with the help of a computer that 
$\bz'(a,b)=\bz(a,b)-1$. 

Suppose now that part~\eqref{cor:eq2} does not hold. Since $a>b$ and since we are excluding the cases listed in~\eqref{cor:eq2}, by Theorem~\ref{thrm:main}, we have $$\bz(a,b)=\lceil\log_b(a+2)\rceil+1.$$
Now,
by Remark~\ref{alt-easy}, we obtain $$\bz'(a,b) \le \lceil\log_b(a+2)\rceil+1.$$ By Lemma~\ref{lower-bound-alt}, we have $\bz'(a,b) \ge \lceil \log_b(a+1)\rceil+1$. Therefore, either $\bz'(a,b)=\bz(a,b)$ and part~\eqref{cor:eq1} holds, or $a=b^k-1$ for some $k\ge 2$ (because $a>b$) and part~\eqref{cor:eq4} holds.
\end{proof}

In the next two results, we deal completely with the possibility that $a=b^k-1$ assuming that $a,b \ge 3$.

\begin{lem}\label{lem:222}
Let $b$ and $k$ be integers with $b\ge 3$ and $k\ge 2$ and let $a=b^k-1$. If either of the following is true:
\begin{itemize}
\item $b<k+ \lfloor (k+1)/2\rfloor +2$ and $k \ge 3$, or 
\item $k=2$ and $b=3$,
\end{itemize}
then $\bz'(a,b)=\bz(a,b)-1$.
\end{lem}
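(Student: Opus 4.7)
The plan is to show $\bz'(a,b)=k+1$. Combined with $\bz(a,b)=k+2$ from Theorem~\ref{thrm:main} (since $a=b^k-1$ with $b\ge 3$ and $k\ge 2$ is not one of the listed exceptions and $\lceil\log_b(a+2)\rceil=k+1$), this will yield $\bz'(a,b)=\bz(a,b)-1$. The lower bound $\bz'(a,b)\ge k+1$ is immediate from Lemma~\ref{lower-bound-alt}, because $\lceil\log_b(a+1)\rceil+1=\lceil\log_b(b^k)\rceil+1=k+1$; combined with $\bz'(a,b)\le\bz(a,b)=k+2$ from Remark~\ref{alt-easy}, the task reduces to producing an explicit base of size $k+1$ for $\alt{\Omega}$.

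By Remark~\ref{remalt}, such a base corresponds to identifying $\Omega$ with a multiset in $\Delta^{k+1}$ of cardinality $ab=b^{k+1}-b$ having exactly one doubled point $p^*$, such that the subgroup of $(\sym\Delta)^{k+1}$ fixing this multiset is trivial. Writing $Q\subseteq\Delta^{k+1}$ for the complement of the underlying set of this image, one has $|Q|=b+1$ and the $(a,b)$-regularity forces each coordinate $j$ to possess a distinguished value $p^*_j\in\Delta$ appearing twice in $\{q[j]:q\in Q\}$ and every other value exactly once. For each $q\in Q$, set $S(q):=\{j\in\{1,\ldots,k+1\}:q[j]=p^*_j\}$; then each $j$ lies in exactly two of these sets.

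The crucial observation is that if the sets $\{S(q):q\in Q\}$ are pairwise distinct and none of them equals $\{1,\ldots,k+1\}$, then automatically $p^*\notin Q$ and the stabilizer of $Q$ in $(\sym\Delta)^{k+1}$ is trivial. Indeed, any stabilizing $\sigma$ must fix $p^*$ (since $p^*_j$ is characterized as the unique value of multiplicity two in coordinate $j$), hence preserves the assignment $q\mapsto S(q)$, hence fixes each row individually by distinctness; as the rows collectively realize every element of $\Delta$ in every coordinate, this forces $\sigma_j$ to be the identity for each $j$.

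It therefore remains to construct a family $\mathcal F=\{S_0,\ldots,S_b\}$ of $b+1$ pairwise distinct proper subsets of $\{1,\ldots,k+1\}$ with each $j$ in exactly two members. I would handle three regimes: for $b=k+1$, take $\{\emptyset\}$ together with the $k+1$ edges of a Hamilton cycle on $\{1,\ldots,k+1\}$; for $k+2\le b\le k+\lfloor(k+1)/2\rfloor+1$, take $\{\emptyset\}$, the $k+1$ singletons, and a partition of $\{1,\ldots,k+1\}$ into $b-k-1$ parts each of size at least two (feasible precisely when $2(b-k-1)\le k+1$, which is exactly the hypothesis); and for $b\le k$ (in particular $(b,k)=(3,2)$, handled by $\{\emptyset,\{1,2\},\{1,3\},\{2,3\}\}$) use direct ad hoc designs. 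Once $\mathcal F$ is fixed, set $p^*=(0,\ldots,0)$ and define the rows of $Q$ by $r_i[j]=0$ for $j\in S_i$ and by filling each column's remaining $b-1$ entries bijectively with $\{1,\ldots,b-1\}$. The main obstacle is the combinatorial construction at the extremal value $b=k+\lfloor(k+1)/2\rfloor+1$, where the partition requirement becomes tight and forces the bound appearing in the lemma's hypothesis.
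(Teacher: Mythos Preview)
Your overall strategy is the same as the paper's: deduce $\bz(a,b)=k+2$ from Theorem~\ref{thrm:main} and $\bz'(a,b)\ge k+1$ from Lemma~\ref{lower-bound-alt}, then exhibit $k+1$ partitions by duplicating $(0,\dots,0)$ in $\Delta^{k+1}$ and deleting a set $Q$ of $b+1$ elements whose zero-patterns $S(q)$ force triviality of the stabiliser. Your structural observation---that it suffices for the sets $S(q)$ to be pairwise distinct proper subsets of $\{1,\dots,k+1\}$ covering each coordinate twice---is cleaner than the paper's argument, which tracks the actual nonzero entries case by case; in fact the paper's explicit $Q$ also has pairwise distinct $S(q)$, so your lemma subsumes its verification.

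There is, however, a genuine gap in your construction at $b=k+2$. In that case your recipe calls for $\emptyset$, the $k+1$ singletons, and a partition of $\{1,\dots,k+1\}$ into $b-k-1=1$ part of size at least two---which is $\{1,\dots,k+1\}$ itself, a \emph{non}-proper subset. Your own criterion then fails: the row with $S_i=\{1,\dots,k+1\}$ is $p^*=(0,\dots,0)$, so $p^*\in Q$ and the duplicated point is no longer in the underlying set. Since $k+2<k+\lfloor(k+1)/2\rfloor+2$ for all $k\ge 1$, this value of $b$ is always in range (e.g.\ $(k,b)=(3,5)$). The fix is easy---for instance replace the family by $\emptyset,\{1\},\dots,\{k-1\},\{k\},\{k,k+1\},\{1,\dots,k-1,k+1\}$, or simply borrow the paper's $t=b-2$ construction, whose zero-patterns $\{1\},\dots,\{k\},\{k+1\},\{1,2\},\{3,\dots,k+1\}$ are distinct and proper---but as written the argument does not cover this case.

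Two smaller points: the regime $3\le b\le k$ is dismissed as ``ad hoc'' without a construction (one uniform choice is $\emptyset$ together with $b$ sets whose complements partition $\{1,\dots,k+1\}$ into $b$ nonempty blocks, which works for all $b\le k+1$); and $(b,k)=(3,2)$ satisfies $b=k+1$, not $b\le k$, so it is already covered by your Hamilton-cycle family.
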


\begin{proof}
By Theorem~\ref{thrm:main} for $b\ge 3$, we have $$\bz(a,b)=\lceil\log_b(a+2)\rceil+1=\lceil\log_b(b^k+1)\rceil+1=k+2.$$ By Lemma~\ref{lower-bound-alt} we have $$\bz'(a,b) \ge \lceil \log_b(a+1)\rceil+1=\lceil\log_b(b^{k})\rceil+1=k+1.$$ So, if we can find a base with $k+1$ elements, this will complete the proof. When $k=2$ and $b=3$, this is easily verified by computer. Indeed, the stabilizer in $\alt{24}$ of the following three $(8,3)$-regular partitions is the identity:
\begin{align*}
\Sigma_1:=&\{\{1,2,3,4,5,6,7,8\},\{9,10,11,12,13,14,15,16\},\{17,18,19,20,21,22,23,24\}\},\\
\Sigma_2:=&\{
\{ 1, 2, 8, 9, 12, 17, 18, 22 \},
    \{ 4, 5, 6, 10, 15, 16, 21, 23 \},
    \{ 3, 7, 11, 13, 14, 19, 20, 24 \}
\},\\
\Sigma_3:=&\{
    \{ 1, 5, 8, 13, 16, 17, 20, 21 \},
    \{ 2, 3, 4, 11, 12, 15, 18, 24 \},
    \{ 6, 7, 9, 10, 14, 19, 22, 23 \}\}.
\end{align*}

Assume now that $b<k+ \lfloor (k+1)/2\rfloor +2$ and $k \ge 3$ (so $k+1 \ge 4$). We aim to use Remarks~\ref{remalt} and~\ref{rempablopablo}. There are $b^{k+1}=ab+b$ distinct elements in $\Delta^{k+1}$. Our plan in this proof is to construct a certain set of cardinality $ab$; we do this by removing $b+1$ elements from $\Delta^{k+1}$ and by adding the duplicate of an element. Next, by considering the $k+1$ coordinates of $\Delta^{k+1}$, this set of cardinality $ab$ gives rise to $k+1$ $(a,b)$-regular partitions. Finally, using Remark~\ref{rempablopablo}, we prove that these $k+1$ regular partitions form a base for $\mathrm{Alt}(ab)$. Our construction depends on whether $k+1\le b-2$, or $k+1>b-2$. Set $$t:=\min\{k+1,b-2\}.$$

We add a second copy of $(0,\ldots, 0)$ in $\Delta^{k+1}$.  We remove the $t$ elements that have $i$ in every coordinate except the $i$th, and $0$ in the $i$th coordinate, where $1 \le i \le t$ (since $k+1 \ge t$ there are enough coordinates).

 Suppose first that $k+1 \le b-2$. So $t=k+1$ and the $t$ elements that we have removed so far are 
 \begin{align*}
 (0,\underbrace{1,\ldots,1}_{k\textrm{ times}}),\,
 (2,0,\underbrace{2,\ldots,2}_{k-1\textrm{ times}}),\,
 (3,3,0,\underbrace{3,\ldots,3}_{k-2\textrm{ times}}),\,\ldots,
 (\underbrace{k+1,\ldots,k+1}_{k\textrm{ times}},0).
 \end{align*} Since $b+1 \le k+1+\lfloor (k+1)/2\rfloor+1$ and we have removed $k+1$ elements, we have to remove at most $\lfloor (k+1)/2\rfloor +1$ more elements. We remove the elements that have $0$s in coordinates $2i-1$ and $2i$ and $k+1+i$ in every other coordinate, for $1 \le i \le b-k-2$. Since our bound on $b$ relative to $k$ implies that $2(b-k-2) \le k-1$, there are sufficient positions to do this. In practice, we are removing
 $$
 (0,0,\underbrace{k+2,\ldots,k+2}_{k-1 \textrm{ times}}),\,
 (k+3,k+3,0,0,\underbrace{k+3,\ldots,k+3}_{k-3 \textrm{ times}}),\ldots,
 (\underbrace{b-1,\ldots,b-1}_{2(b-k-2)-2 \textrm{ times}},0,0,b-1,b-1,\ldots,b-1).$$  Next we remove the element that has $k+1+i$ in coordinates $2i-1$ and $2i$ for $1 \le i \le b-k-2$, and $0$s in the remaining coordinates (again by our bound on $b$ relative to $k$ there will be at least two $0$s at the end of this string). Thus, we are removing the element
 $$
 (k+2,k+2,k+3,k+3,\ldots,b-1,b-1,0,\ldots,0).$$
  Finally we remove the element $$(1,2,3,\ldots,k+1)$$ that has $i$ in position $i$ for $1 \le i \le k+1$. Summing up, we have added one element and removed $k+1+(b-k-2)+1+1=b+1$, leaving us with $$ab+b+1-(b+1)=ab$$ elements. It is elementary to verify that in this set, for each coordinate $i$ and for each element $x$ of $\Delta$, $x$ appears in coordinate $i$ exactly $a$ times. Therefore, we obtain $k+1$ $(a,b)$-regular partitions. (Note that $k+1 \ge 4$ is also necessary in this construction.)

\smallskip 

Suppose now $b-2<k+1$ so that $t=b-2$. Then the $t$ elements we have removed in the first step of our construction are
\begin{align*}
 (0,\underbrace{1,\ldots,1}_{k\textrm{ times}}),\,
 (2,0,\underbrace{2,\ldots,2}_{k-1\textrm{ times}}),\,
 (3,3,0,\underbrace{3,\ldots,3}_{k-2\textrm{ times}}),\,\ldots,
 (\underbrace{b-2,\ldots,b-2}_{b-3\textrm{ times}},0,b-2,\ldots,b-2).
 \end{align*} 
Thus we need to remove $3$ more elements. We remove 
 \begin{itemize}
 \item the element $(b-1,b-1,0,\ldots,0)$ that has $b-1$ in each of the first $2$ coordinates, and $0$ in the remaining coordinates (since $k\ge 3$ there are at least two such coordinates); 
 \item the element $(0,0,b-1,\ldots,b-1)$ that has $b-1$ in each of the final $k-1$ coordinates and $0$ in the remaining $2$ coordinates; and 
 \item the element $(1,2,3,\ldots,b-2,0,\ldots,0)$ that has $i$ in coordinate $i$ for $1 \le i \le b-2$, and $0$ in the remaining $k-b+3 \ge 1$ coordinates. 
 \end{itemize}

Let $\sigma=(\sigma_1, \ldots, \sigma_{k+1}) \in (\sym{\Delta})^{k+1}$ and suppose that $\sigma$ fixes setwise the collection of elements of $\Delta^{k+1}$ that we have chosen. Since $(0,\ldots,0)$ is the only element that appears twice in our set, it must be fixed by $\sigma$, so $0^{\sigma_i}=0$ for $1 \le i \le k+1$. 

Suppose that $t=k+1$. Since the element that has $0$s in coordinates $2i-1$ and $2i$ and $k+1+i$ in every other coordinate, where $1 \le i \le b-k-2$, is the only element with $0$s in both of these coordinates that is not in our set, and $\sigma$ has to send it to an element that has $0$s in both of these coordinates, it must be fixed by $\sigma$. Likewise, the element that has $k+1+i$ in coordinates $2i-1$ and $2i$ for $1 \le i \le b-k-2$, and $0$s in the remaining coordinates is the only element with $0$ as its final two entries that is not in our set, so it must be fixed by $\sigma$. Now
for $1 \le i \le k+1$, since the element we removed in our first step is the only element with $i$th entry $0$ that is not in our set and has not already been fixed by $\sigma$, it too must be fixed by $\sigma$. Taking all of these together forces each $\sigma_i$ to fix at least all but one element of $\Delta$ (the element that appears in coordinate $i$ in the very last string we removed), which means that in fact $\sigma_i$ fixes every element of $\Delta$. So $\sigma$ is the identity.

Finally, suppose that $t=b-2<k+1$. For each $1 \le i\le b-2$, there are two elements not in our set that have $0$ in coordinate $i$; one of them has just one $0$ (in coordinate $i$) and $i$ everywhere else, while the other has at least two $0$s. Since $\sigma$ must send an element that has a single $0$ in coordinate $i$ to an element that has a single $0$ in coordinate $i$, each of the elements not in our set that has a single $0$ in coordinate $i$ must be fixed by $\sigma$. Similarly, the other elements not in our set that have $0$s in some coordinate $i$ where $1 \le i \le b-2$ must each be fixed by $\sigma$, and so the final element not in our set must be fixed by $\sigma$. Since every element of $\Delta$ appears at least once in each cordinate somewhere in these fixed elements, this forces $\sigma$ to be the identity, completing our proof.
\end{proof}

\begin{lem}\label{lem:22}
Let $b$ and $k$ be integers with $b\ge 3$ and $k\ge 2$ and let $a=b^k-1$. If either of the following is true:
\begin{itemize}
\item $b \ge k+\lfloor (k+1)/2\rfloor +2$, or 
\item $k=2$ and $b=4$,
\end{itemize}
then $\bz'(a,b)=\bz(a,b)$.
\end{lem}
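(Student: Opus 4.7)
The plan is to sandwich $\bz'(a,b)$ between $k+1$ and $k+2$ and then rule out the smaller value. Theorem~\ref{thrm:main} applied to $a=b^k-1$ gives $\bz(a,b)=k+2$, Remark~\ref{alt-easy} gives $\bz'(a,b)\le\bz(a,b)=k+2$, and Lemma~\ref{lower-bound-alt} gives $\bz'(a,b)\ge\lceil\log_b(a+1)\rceil+1=k+1$. I will suppose, for contradiction, that $\Sigma_1,\ldots,\Sigma_{k+1}$ is an alternating base of size $k+1$, apply Remark~\ref{remalt} to split into the cases~\eqref{uglyyy} and~\eqref{uglyyyY}, and produce in each a non-identity even permutation stabilizing every $\Sigma_i$.

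In Case~\eqref{uglyyy}, the injection $\theta:\Omega\hookrightarrow\Delta^{k+1}$ has complement $R:=\Delta^{k+1}\setminus\theta(\Omega)$ of size $b$ that projects bijectively onto $\Delta$ in each coordinate. For any $3$-cycle $\tau\in\sym{R}$, setting $\sigma_i:=\pi_i\tau\pi_i^{-1}$ via the coordinate bijections $\pi_i:R\to\Delta$ produces $\sigma=(\sigma_1,\ldots,\sigma_{k+1})$ fixing $R$ (hence $\Omega$) setwise. Each $\sigma_i$ has cycle type $(3,1^{b-3})$, so all $\sigma$-orbits on $\Delta^{k+1}$ have length $1$ or $3$: $\sigma$ is an even product of $3$-cycles on $\Delta^{k+1}$, and restricts to the even $3$-cycle $\tau$ on $R$, so its action on $\Omega$ is even; it is non-identity since the support of $\sigma$ on $\Delta^{k+1}$, of size $b^{k+1}-(b-3)^{k+1}$, far exceeds $|R|=b$. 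This contradicts Remark~\ref{rempablopablo}.

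In Case~\eqref{uglyyyY}, some $\bar x\in\Delta^{k+1}$ has two preimages in $\Omega$, and $R:=\Delta^{k+1}\setminus\theta(\Omega)$ has size $b+1$ with each projection hitting $\bar x_i$ twice and each other value once. Set $D_i:=\{r\in R:r_i=\bar x_i\}$ (so $|D_i|=2$) and attach to each $r\in R$ the \emph{profile} $P(r):=\{i:r_i=\bar x_i\}\subsetneq\{1,\ldots,k+1\}$ (proper, since $\bar x\notin R$). An element $\sigma\in(\sym{\Delta})^{k+1}$ fixing $\bar x$ pointwise and $R$ setwise must preserve the $P$-fibres on $R$, and conversely any profile-preserving permutation of $R$ extends uniquely to such a $\sigma$; hence a non-identity $\sigma$ exists if and only if two elements of $R$ share a profile. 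Such a $\sigma$ acts non-trivially on $\theta(\Omega)$ (its support on $\Delta^{k+1}$ is of size $\ge 2b^k$, far exceeding $|R|+1$), so both of its two lifts to $\Omega$ (differing by the odd transposition of the preimages of $\bar x$) are non-identity, and exactly one is even --- the desired contradicting element of $\alt{\Omega}$.

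Thus the hard part is to force a profile collision. The fixed sum $\sum_{r\in R}|P(r)|=\sum_i|D_i|=2(k+1)$, compared against the minimum total weight $0+(k+1)+2(b-k-1)=2b-k-1$ of $b+1$ pairwise distinct profiles (taking the weight-$0$ profile, all $k+1$ weight-$1$ profiles, then $b-k-1$ weight-$2$ profiles), shows that distinct profiles are impossible when $2b-k-1>2(k+1)$; separating on the parity of $k$, this reduces precisely to the main hypothesis $b\ge k+\lfloor(k+1)/2\rfloor+2$, so pigeonhole delivers the required collision. The exceptional case $(k,b)=(2,4)$ just misses this threshold (five pairwise distinct profiles of total weight $6$ remain combinatorially realizable: one of weight $0$, two of weight $1$, two of weight $2$), and I expect this to be the main technical obstacle: it should be settled either by a finer structural analysis of the three possible distinct-profile configurations, or by a direct computer verification in $\alt{60}$.
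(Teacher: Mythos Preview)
Your argument is correct and follows the same strategy as the paper: sandwich $\bz'(a,b)$ between $k+1$ and $k+2$, then rule out $k+1$ by splitting (via Remark~\ref{remalt}) into the injective case and the one-duplicate case, producing in each a nontrivial element of $(\sym{\Delta})^{k+1}$ stabilising the image. Your ``profile'' formulation in Case~\eqref{uglyyyY} is a clean restatement of the paper's counting of zero-coordinate patterns among the $b+1$ removed elements, and your pigeonhole inequality $2b-k-1>2(k+1)$ is exactly the paper's bound; in Case~\eqref{uglyyy} the paper normalises $R$ to the diagonal and uses the $3$-cycle $(0\;1\;2)$ in every coordinate, which is a slightly quicker route to your conclusion. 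The one point you leave open, $(k,b)=(2,4)$, is handled in the paper precisely as you propose, by direct computer verification in $\alt{60}$, so you should carry that out to finish.
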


\begin{proof}
By Theorem~\ref{thrm:main} for $b\ge3$, we have $\bz(a,b)=\lceil\log_b(a+2)\rceil+1=\lceil\log_b(b^k+1)\rceil+1=k+2$. By Lemma~\ref{lower-bound-alt} we have $$\bz'(a,b) \ge \lceil \log_b(a+1)\rceil+1=\lceil\log_b(b^{k+1})\rceil=k+1.$$ Also, by Remark~\ref{alt-easy} we have $\bz'(a,b) \le \bz(a,b)=k+2$. So if we can show that there is no base with $k+1$ elements, this will complete the proof. We aim to use Remarks~\ref{remalt} and~\ref{rempablopablo}. Given $k+1$ $(a,b)$-regular partitions of a set $\Omega$ of cardinality $ab$, we obtain a natural map from $\Omega$ to $\Delta^{k+1}$. Let $N$ be the image of this mapping. From Remark~\ref{alt-easy}, if the regular partitions form a base for $\mathrm{Alt}(ab)$, then $N$ has either cardinality $ab$ (and hence we may identify $\Omega$ with $N$) or $ab-1$ (and hence we may identify $\Omega$ with $N$ and with the duplicate of an element of $\Delta^{k+1}$). We consider a number of cases individually.

First suppose that $N$ does not include any duplicate elements from $\Delta^{k+1}$. Then $N$ uses all but $b$ elements from $\Delta^{k+1}$. By relabelling the parts in our partitions if necessary, we may assume without loss of generality that the elements not used by the base are $(i, \ldots, i)$ where $0 \le i \le b-1$ (otherwise we would not have regular partitions). Since $b \ge 3$, it is easy to see that the permutation $\sigma$ that acts as the permutation $(0\ 1\ 2)$ on each coordinate of $\Delta^{k+1}$ is an even permutation that fixes  setwise the set $\{(i,\ldots,i):i\in \Delta\}$, contradicting our assumption that we had chosen a base. Henceforth we may assume that $N$ must include a duplicate element; again without loss of generality (by relabelling the parts in our partitions) we may assume that this duplicated element is $(0, \ldots, 0)$. 

At this point, the action of $\sym{\Omega}$ on our base already includes the involution that exchanges the two elements of $\Omega$ that map to $(0,\ldots, 0)$, so if we indeed have a base for $\alt{\Omega}$ the action of $(\sym{\Delta})^{k+1}$ on the elements we choose must be trivial.
In order for our base to be comprised of $(a,b)$-regular partitions, the $b+1$ elements that we remove (after duplicating $(0,\ldots,0)$) must altogether include two $0$s and one of each other value $i$ ($1 \le i \le b-1$) in each coordinate. Also, $\sigma$ fixes the base setwise if and only if it fixed the collection of elements that are removed, setwise.

The first case we consider is $b+1 \ge 2(k+1)+2$, i.e. $b \ge 2k+3$. From the $b+1$ elements of $\Delta^{k+1}$ that we remove, at most $2(k+1)$ contain a $0$ since altogether there are two $0$s in each of the $k+1$ coordinates. If $b \ge 2k+3$ then there are at least two removed elements $i=(i_1,\ldots, i_{k+1})$ and $j=(j_1,\ldots, j_{k+1})$ such that $i_m, j_m\neq 0$ for $1 \le m \le k+1$. Neither $i_m$ nor $j_m$ appears in the $m$th coordinate of any other removed element. Therefore, taking $\sigma_m$ to be the element of $\sym{\Delta}$ that transposes $i_m$ and $j_m$, and $\sigma=(\sigma_1, \ldots, \sigma_{k+1})$, we see that not only does $\sigma$ exchange the two removed elements $i$ and $j$, but due to the lack of repetition of these coordinates in these coordinates amongst the other removed elements, $\sigma$ fixes every other element we have removed. Thus, we have a nontrivial permutation $\sigma \in (\sym{\Delta})^{k+1}$ that fixes any set we try to choose as a base, so no base with $k+1$ elements exists.

We can refine the argument of the previous paragraph slightly, to deal with the case where $b+1 \ge k+1+\lfloor (k+1)/2\rfloor+2$. This is because if any two of our removed elements that do contain at least one $0$ have their zeroes in exactly the same coordinates, then the element $\sigma$ of $(\sym{n})^{k+1}$ that transposes the $m$th coordinate of these two removed elements and fixes any other possible coordinate in the $m$th coordinate, will be a nontrivial permutation of the set we were trying to make a base. So in an actual base, our removed elements can include at most $k+1$ that have exactly one zero coordinate (in each possible coordinate), and another at most $\lfloor (k+1)/2\rfloor$ that have two or more zero coordinates. 

This completes the proof except in the small special case, which is easy to check by computer. 
\end{proof}

We conclude this section by proving Theorem~\ref{thrm:main2} when $b\ge3$.
\begin{proof}[Proof of 
Theorem~$\ref{thrm:main2}$ for $b\ge3$.]
From Corollary~\ref{cor:cor}, it suffices to consider the case $a=b^k-1$ with $k\ge 2$. Suppose that $\bz'(a,b)\ne \bz(a,b)$. Then, from Lemma~\ref{lem:22}, we must have $$b<k+\left\lfloor\frac{k+1}{2}\right\rfloor+2,$$
and $(b,k)\ne (4,2)$. When $k\ge 3$, Lemma~\ref{lem:222} yields $\bz'(a,b)=\bz(a,b)-1$ and the proof follows. Assume then $k<3$, that is, $k=2$. Therefore, $b<2+1+2=5$ and $b\in \{3,4\}$. As $(b,k)\ne (4,2)$, we deduce $b=3$ and Lemma~\ref{lem:222} gives $\bz'(a,b)=\bz(a,b)-1$.
\end{proof}

\section{Base sizes when $b=2$}

When $b=2$, since $\bz(2,2)$ does not exist we assume $a \ge 3$. The lower bound of Lemma~\ref{lower-bound} still applies, so $\bz(a,2) \ge \lceil\log_2(a+2)\rceil +1$. We begin with a lemma that deals with the cases where $a=2^i$ or $a=2^i-1$, for some $i \ge 3$.

\begin{lem}\label{lem:b=2}
If $a \in \{2^i,2^i-1: i \ge 3\}$, then $\bz(a,2)=i+2$.
\end{lem}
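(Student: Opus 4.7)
The lower bound $\bz(a,2)\ge i+2$ follows immediately from Lemma~\ref{lower-bound}: in both cases $a+2$ lies strictly between $2^i$ and $2^{i+1}$, so $\lceil\log_2(a+2)\rceil=i+1$.

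For the upper bound I would identify $\Delta^{i+2}$ with $\mathbb{F}_2^{i+2}$; under this identification $(\sym{\Delta})^{i+2}\cong\mathbb{F}_2^{i+2}$ acts on $\mathbb{F}_2^{i+2}$ by translation $x\mapsto x+\epsilon$. By Remark~\ref{rempablopablo}, it suffices to produce a subset $N\subseteq\mathbb{F}_2^{i+2}$ with $|N|=2a$ in which each value of $\Delta$ appears $a$ times in every coordinate, and whose setwise stabilizer $H_N:=\{\epsilon\in\mathbb{F}_2^{i+2}:N+\epsilon=N\}$ is trivial. Since $H_N\le\mathbb{F}_2^{i+2}$, its order divides $|N|$, a fact that plays a crucial role below.

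For $a=2^i$ I would take $N$ to be the graph $\{(v,\chi(v)):v\in\mathbb{F}_2^{i+1}\}$ of a Boolean function $\chi\colon\mathbb{F}_2^{i+1}\to\mathbb{F}_2$. Coordinate regularity is equivalent to $\chi$ being balanced, and a direct computation shows $\epsilon=(\epsilon',\epsilon_{i+2})\in H_N$ if and only if $\chi(v+\epsilon')+\chi(v)=\epsilon_{i+2}$ for every $v$, i.e., $\epsilon'$ is a \emph{linear structure} of $\chi$. Hence it suffices to choose a balanced $\chi$ with no non-trivial linear structure; since $i+1\ge 4$ the existence of such $\chi$ is easy (a counting argument shows they are generic among balanced functions, and for $i=3$ one can take $\chi^{-1}(1)=\{0000,0001,0010,0100,0111,1000,1011,1111\}$).

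For $a=2^i-1$, starting from the graph $M$ of a balanced $\chi$ with no linear structures, I would pick $v_1\in\mathbb{F}_2^{i+1}$ with $\chi(v_1)\ne\chi(v_1+\mathbf{1})$ (such $v_1$ exists because $\mathbf{1}$ is not a linear structure of $\chi$) and set $u_1=(v_1,\chi(v_1))$, $u_2=(v_1+\mathbf{1},\chi(v_1+\mathbf{1}))$, and $N:=M\setminus\{u_1,u_2\}$. The choice forces $u_1+u_2=\mathbf{1}\in\mathbb{F}_2^{i+2}$, preserving coordinate regularity. Since $|N|=2(2^i-1)$ is twice an odd number, $|H_N|\in\{1,2\}$. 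A short analysis shows that $\epsilon\in H_N\setminus\{0\}$ forces $(M+\epsilon)\triangle M\subseteq\{u_1,u_2,u_1+\epsilon,u_2+\epsilon\}$; matching first coordinates pins the only candidate to $\epsilon=(\mathbf{1}_{i+1},0)$, and this element belongs to $H_N$ if and only if the set $W_0:=\{w:\chi(w+\mathbf{1}_{i+1})\ne\chi(w)\}$ equals $\{v_1,v_1+\mathbf{1}_{i+1}\}$, i.e., $|W_0|=2$. Choosing $\chi$ that simultaneously has no linear structures and satisfies $|W_0|\ge 4$ then yields $H_N=\{0\}$; both conditions can be met for every $i\ge 3$ (for instance the explicit $\chi$ above has $|W_0|=4$).

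The main obstacle is the stabilizer analysis in the case $a=2^i-1$: after the symmetric-difference computation pins down $\epsilon=(\mathbf{1}_{i+1},0)$ as the only candidate for a non-trivial element of $H_N$, one must arrange the function $\chi$ so that its derivative in the direction $\mathbf{1}_{i+1}$ differs from zero on strictly more than one pair, and do this compatibly with the no-linear-structure requirement for all $i\ge 3$.
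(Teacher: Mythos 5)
Your proof is correct in substance and takes a genuinely different route from the paper's, although the two are secretly close. The paper's set $N_1$ is itself the graph of an explicit balanced Boolean function on $\mathbb{F}_2^{i+1}$ (the $(i+1)$st coordinate, flipped exactly on a prescribed $4$-element set $T_{i+1}$), and for $a=2^i-1$ it deletes an antipodal pair of points of that graph, which is exactly your $u_1,u_2$ trick; triviality of the stabiliser is then checked by hand in both cases at once, using the four ``defect'' points where the last two coordinates disagree. (Your care in choosing which antipodal pair to delete is warranted: the pair $\{(0,\ldots,0),(1,\ldots,1)\}$ named in the paper is not entirely contained in $N_1$ as defined, so one must indeed pick a pair genuinely lying in the graph, as you do.) What your reformulation buys is a structural explanation: translation stabilisers of graphs are exactly linear structures, so the case $a=2^i$ reduces to the existence of a balanced function with no nontrivial linear structure, and your divisibility-plus-symmetric-difference analysis correctly pins the only possible nontrivial stabiliser element in the case $a=2^i-1$ to $(\mathbf{1}_{i+1},0)$, leading to the extra requirement $|W_0|\ge 4$. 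What the paper's route buys is that it is explicit and uniform in $i$, so no existence argument is needed.

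The one step you have not closed is the simultaneous existence, for every $i\ge 3$, of a balanced $\chi$ on $\mathbb{F}_2^{i+1}$ with no nontrivial linear structure and with $|W_0|\ge 4$: you verify this only for $i=3$, and the counting you invoke addresses only the linear-structure condition. It does extend, and you should say so explicitly: writing $n=i+1$, at most $(2^n-1)\cdot 2\cdot 2^{2^{n-1}}$ functions admit a nontrivial linear structure (determined by the direction, the constant, and the values on a transversal of the corresponding order-two subgroup), and at most $2^{n-1}\cdot 2^{2^{n-1}}$ satisfy $|W_0|=2$ (determined by the distinguished pair and the values on a transversal of $\langle\mathbf{1}\rangle$); for $n=4$ the total $7680+2048=9728$ is smaller than $\binom{16}{8}=12870$, and for $n\ge 5$ the margin is enormous, so a suitable $\chi$ exists for every $i\ge 3$ (and for $i=3$ your explicit example already does the job, with $|W_0|=4$). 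With that argument inserted, your proof is a complete and valid alternative to the one in the paper.
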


\begin{proof}
By Lemma~\ref{lower-bound}, we see that $\bz(a,2) \ge i+2$. Here $\Delta:=\{0,1\}$.

Define 
$$T_3=\{(0,0,0),(0,0,1),(0,1,0),(1,0,1)\}\subseteq \Delta^3.$$
Partition the elements of $\Delta^{i+1}$ into 
$$T_{i+1}:=\{(\underbrace{0,\ldots,0}_{i-2 \textrm{ times}},x,y,z)\in \Delta^{i+1}: (x,y,z) \in T_3\}\,\,\,\hbox{ and }\,\,\,T_{i+1}' = \Delta^{i+1}\setminus T_{i+1}.$$
Define $$T:=\{(n_1, \ldots, n_{i+2})\in \Delta^{i+2}: (n_1, \ldots, n_{i+1}) \in T_{i+1}\,\text{ and }\,n_{i+2}=1-n_{i+1}\},$$
and let 
$$N_1=T \cup \{(n_1, \ldots, n_{i+2})\in \Delta^{i+2}: (n_1, \ldots, n_{i+1}) \in T_{i+1}'\, \text{ and }\, n_{i+2}=n_{i+1}\}$$
and $$N_2=N_1\setminus \{(\underbrace{x,\ldots, x}_{i+2\textrm{ times}}): x \in\{0,1\}\}\subseteq \Delta^{i+2}.$$

By considering the $i+2$ coordinates of $\Delta^{i+2}$, the inclusion of $N_1$ (respectively, $N_2$) in $\Delta^{i+2}$ gives rise to a collection of $i+2$  partitions of a set of cardinality $|N_1|=2^{i+1}$ (respectively, $|N_2|=2^{i+1}-2$) having two parts. We claim that these partitions are in fact $(2^i,2)$-regular partitions (respectively, $(2^i-1,2)$-regular partitions).

Since $T_{i+1}$ and $T_{i+1}'$ partition $\Delta^{i+1}$, $0$ and $1$ each appear exactly $2^i$ times in each of the first $i+1$ coordinates among the elements of $N_1$. Since the final coordinate differs from the one before it in exactly $4$ elements of $N_1$ (two of which have a $0$ in this position and two of which have a $1$), $0$ and $1$ also each appear exactly $2^i$ times in the final coordinate. Similarly, $N_2$ corresponds to a collection of regular partitions, since $0$ and $1$ each occur once in each coordinate in the two elements that are removed from $N_1$ to yield $N_2$.

From Remark~\ref{rempablopablo}, to conclude the proof it remains to show that, if $\sigma=(\sigma_1, \ldots, \sigma_{i+2}) \in (\sym{\Delta})^{i+2}$ fixes either $N_1$ or $N_2$ setwise, we must have $\sigma=1$.  First observe that since $|T_{i+1}|=|T_3|=4$, we have $|T_{i+1}'|=2^{i+1}-4 \ge 12$, because $i \ge 3$. So $N_1$ has exactly $|T|=4$ elements in which the final coordinate and the one before it are not equal; those coordinates are equal in all of the elements of $N_1\setminus T$; and $|N_1\setminus T| \ge 12$. Similarly, $N_2$ has exactly $|T|=4$ elements in which the final coordinate and the one before it are not equal; those coordinates are equal in all of the elements of $N_2\setminus T$; and $|N_2\setminus T| \ge 10$. So in each case, the set $T$ of elements in which the final coordinate and the one before it are not equal, must be fixed setwise by $\sigma$. By definition of $T_3$, exactly one element of $T$ has a $1$ in coordinate $i$, so this element of $T$ must be fixed by $\sigma$. But this means that every $\sigma_j$ must be the identity, so $\sigma=1$, completing the proof.
\end{proof}

Putting this together with Lemma~\ref{lem:complement} and some calculations of small cases enables us to conclude the proof of Theorem~\ref{thrm:main} part~\eqref{eqthrm:main2}, that is, when $b=2$.

\begin{proof}[Proof of Theorem~$\ref{thrm:main}$ part~\eqref{eqthrm:main2}]Here $b= 2$.
For the sake of later arguments it will be important to note that $$\bz(3,2)=4=\lceil\log_2(3+3)\rceil+1$$ and $\bz(4,2)=5$ (one larger than the general result); a computer can easily verify these. In fact, even though $\bz(4,2)$ does not equal $\lceil\log_2(a+3)\rceil+1$, these can serve as the base cases for our induction. We now suppose $a\ge 5$.

Assume inductively that the result holds when $3 \le a < 2^i+r$ (with the exceptional value when $a=4$ as explained above), where $0 \le r \le 2^i-1$, so $i \ge 2$. Let $x=2^i+r$. We now divide the proof depending on the value of $r$.

\smallskip

\noindent\textsc{Case $r=0$ or $r=2^i-1$.} Here, the result holds for $\bz(x,2)$ using Lemma~\ref{lem:b=2}. 

\smallskip

\noindent\textsc{Case $1 \le r \le 2^i-3$.} Notice that $\lceil \log_2(x+3)\rceil=i+1$ in this case, so we are aiming to prove that $\bz(x,2)=i+2$. Take $\kappa=i+2$, and $a'=2^{i+1}-x$ in Lemma~\ref{lem:complement}. Then $3 \le a' \le 2^i-1$. By our inductive hypothesis together with the small cases mentioned previously, we know that $\bz(a',2)=\lceil \log_2(a'+3)\rceil +1$ unless $a'=4$ in which case $\bz(a',2)=5$. Take $\ell_1=\bz(a',2)$. Now, when $a' \le 2^i-1$ and $i \ge 2$, we have $\lceil\log_2(a'+3)\rceil \le i+1$, so $\ell_1 \le \kappa$. Also, when $a'=4$, since $x \ge 5$, we have $i \ge 3$ and hence $\kappa=i+2 \ge\ell_1=5$. Notice also that, when $a' \le 2^i-1$, we have $$\log_2(2^{i+2}-2a') \ge \log_2(2^{i+1})+1>i+1,$$ so $\lceil\log_2(2^\ell_2-2a')\rceil \ge \kappa$. Thus the conditions of Lemma~\ref{lem:complement} hold and we conclude that $\bz(a,2)=\kappa=i+2$, as desired. 

\smallskip

\noindent\textsc{Case $r=2^i-2$.} Here $x=2^{i+1}-2$ and $\lceil \log_2(x+3)\rceil=i+2$, so we are aiming to prove that $\bz(x,2)=i+3$. First we show that $\bz(x,2) \ge i+3$. By Lemma~\ref{lower-bound}, we know that $\bz(x,2) \ge i+2$. Since there are $2x=2^{i+2}-4$ elements of $\Omega$ in this case, if $\bz(x,2)=i+2$, then we would have to identify the elements of $\Omega$ with all but $4$ of the elements of $\Delta^{i+2}$. Thus, there would have to be a set of $4$ elements of $\Delta^{i+2}$ that is fixed setwise by no nontrivial $\sigma \in (\sym{\Delta})^{i+2}$. However,  applying Remark~\ref{rempablopablo} to this subset of $\Delta^{i+2}$ having $4$ elements, this would imply that $\bz(2,2) \le i+2$, whereas $\bz(2,2)$ is undefined. Thus, $\bz(x,2) \ge i+3$, as claimed.

Now we define a collection of $2^{i+2}-4$ elements of $\Delta^{i+3}$. Note that, since $i \ge 2$, we have $i+2 \ge 4$ and $i+3 \ge 5$. Define
$$T'=\{(\underbrace{0,\ldots,0}_{i+1\textrm{ times}},1),(1,\underbrace{0, \ldots, 0}_{i+1\textrm{ times}}),(0,\underbrace{1,\ldots, 1}_{i\textrm{ times}},0), (\underbrace{1, \ldots, 1}_{i+2\textrm{ times}})\} \subseteq \Delta^{i+2},$$
let $$T_1=\{(\underbrace{0,\ldots, 0}_{i+2\textrm{ times}}),(1,0,1,\underbrace{0, \ldots, 0}_{i-1\textrm{ times}})\} \subseteq \Delta^{i+2},$$ and let $T=\Delta^{i+2}\setminus (T'\cup T_1)$. 
Define
$$N=\{(n_1, \ldots, n_{i+2},n_1)\in \Delta^{i+3}: (n_1, \ldots, n_{i+2}) \in T\} \cup \{(n_1, \ldots, n_{i+2},1-n_1)\in \Delta^{i+3}: (n_1, \ldots, n_{i+2}) \in T_1\}.$$

By considering the $i + 3$ coordinates of $\Delta^{ i+3}$, the inclusion of $N$ in $\Delta^{ i+3}$ gives rise to a collection of $i + 3$ partitions of the set $N$ having two parts. 
It is straightforward to verify that these are in fact $(x,2)$-regular partitions. From Remark~\ref{rempablopablo}, we need to check that, if $\sigma=(\sigma_1, \ldots, \sigma_{i+3}) \in (\sym{\Delta})^{i+3}$ fixes $N$ setwise, then $\sigma$ is the identity. Since $$\{(n_1, \ldots, n_{i+2},1-n_1)\in\Delta^{i+3}: (n_1, \ldots, n_{i+2}) \in T_1\}$$ are the only two elements of $N$ whose first and last coordinates differ, they must be fixed setwise by $\sigma$. If $\sigma$ fixes either of them, then $\sigma$ must be the identity. The only other possibility is that $\sigma$ exchanges these two elements, which completely determines $\sigma$. But we see that for this $\sigma$, $(0,1,0,1,\ldots,1) \in N$ maps to $(1,\ldots, 1,0)$ which is not in $N$ since $(1,\ldots, 1) \in T'$. Thus $\sigma$ must be the identity, completing the proof.

\smallskip

This completes our inductive step, showing that the result is true for all $a \ge 5$.
\end{proof}

Now we consider $\bz'(a,2)$ when $a \ge 5$.

\begin{cor}\label{cor:b=2alt}
Suppose $a \ge 5$. Then $\bz'(a,2) =\bz(a,2)$ except possibly if $a=2^k-1$ or $a=2^k-2$ for some $k \ge 3$, in which case $\bz(a,2)-1 \le \bz'(a,2) \le \bz(a,2)$.
\end{cor}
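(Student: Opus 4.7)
The plan is to sandwich $\bz'(a,2)$ between two bounds already established and compare with the known value of $\bz(a,2)$. From Remark~\ref{alt-easy}, I obtain the upper bound $\bz'(a,2)\le \bz(a,2)$. From Lemma~\ref{lower-bound-alt} applied with $b=2$, I obtain the lower bound $\bz'(a,2)\ge \lceil \log_2(a+1)\rceil+1$. Moreover, because $a\ge 5$ (so in particular $a\neq 4$), Theorem~\ref{thrm:main}~\eqref{eqthrm:main2} gives $\bz(a,2)=\lceil \log_2(a+3)\rceil+1$.

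The argument now reduces to comparing $\lceil \log_2(a+1)\rceil$ and $\lceil \log_2(a+3)\rceil$. An elementary case analysis shows that $\lceil \log_2(a+1)\rceil=\lceil \log_2(a+3)\rceil$ except when some power of $2$, say $2^k$, satisfies $a+1\le 2^k\le a+2$, equivalently $a\in \{2^k-1,2^k-2\}$; in those cases $\lceil \log_2(a+3)\rceil=\lceil \log_2(a+1)\rceil+1$. Since $a\ge 5$, one has $2^k\ge 6$ in these exceptional cases, so $k\ge 3$.

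If $a$ is not of this exceptional form, the upper and lower bounds on $\bz'(a,2)$ collide and force $\bz'(a,2)=\bz(a,2)$. In the exceptional cases, the lower bound becomes exactly $\bz(a,2)-1$, yielding $\bz(a,2)-1\le \bz'(a,2)\le \bz(a,2)$, as asserted. There is no real obstacle to the argument: the corollary is an immediate bookkeeping consequence of Remark~\ref{alt-easy}, Lemma~\ref{lower-bound-alt}, and Theorem~\ref{thrm:main}~\eqref{eqthrm:main2}, and the only care required is in correctly identifying the arithmetic condition under which the two ceilings jump apart.
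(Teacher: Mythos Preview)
Your proof is correct and follows essentially the same approach as the paper: both use Remark~\ref{alt-easy} and Theorem~\ref{thrm:main}\eqref{eqthrm:main2} for the upper bound, Lemma~\ref{lower-bound-alt} for the lower bound, and then identify the arithmetic condition $a\in\{2^k-1,2^k-2\}$ under which $\lceil\log_2(a+1)\rceil$ and $\lceil\log_2(a+3)\rceil$ differ. Your write-up is slightly more explicit in noting that $a\ge 5$ avoids the exceptional value $a=4$ in Theorem~\ref{thrm:main}\eqref{eqthrm:main2}.
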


\begin{proof}
By Theorem~\ref{thrm:main} and Remark~\ref{alt-easy} we have $$\bz'(a,2) \le \bz(a,2)=\lceil\log_2(a+3)\rceil+1.$$ By Lemma~\ref{lower-bound-alt}, we have $\bz'(a,2) \ge \lceil \log_2(a+1)\rceil+1$. Thus, $\bz'(a,2)=\bz(a,2)$ and we are done, except possibly if $\lceil\log_2(a+1)\rceil< \lceil\log_b(a+3)\rceil$.

If $\lceil\log_2(a+1)\rceil< \lceil\log_b(a+3)\rceil$, then $a=2^k-1$ or $2^k-2$, for some $k \ge 1$. Since $a \ge 5$, we have $k \ge 3$.
\end{proof}

We next consider $\bz'(2^k-1,2)$ and $\bz'(2^k-2,2)$.

\begin{lem}\label{lem:new}
Suppose $k \ge 3$ and let $a\in \{2^{k}-1,2^{k}-2\}$. Then $\bz'(a,2)=\bz(a,2)-1$.
\end{lem}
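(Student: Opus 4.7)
The plan is to use the framework of Remarks~\ref{remalt} and~\ref{rempablopablo} together with Corollary~\ref{cor:b=2alt}: it suffices to exhibit, for each $a$, a collection of $k+1 = \bz(a,2)-1$ $(a,2)$-regular partitions of $\Omega$ whose pointwise stabilizer in $\alt{\Omega}$ is trivial. Such a collection corresponds to a map $\theta:\Omega\to\Delta^{k+1}$ with $\Delta=\{0,1\}$ satisfying either~\eqref{uglyyy} (injective $\theta$) or~\eqref{uglyyyY} (one tuple with double preimage), and the base condition becomes a statement about the subgroup of $(\sym{\Delta})^{k+1}$ preserving $\theta(\Omega)$, together with a parity analysis of how this subgroup acts on $\Omega$.

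For $a=2^{k}-1$, I identify $\Omega$ with $\Delta^{k+1}\setminus\{(0,\ldots,0),(1,\ldots,1)\}$. Regularity is immediate since removing a pair of complementary tuples deletes exactly one $0$ and one $1$ from each coordinate. The setwise stabilizer of $\{(0,\ldots,0),(1,\ldots,1)\}$ in $(\sym{\Delta})^{k+1}$ is $\{1,\tau\}$, where $\tau$ simultaneously swaps $0$ and $1$ in every coordinate. On $\Omega$, $\tau$ pairs each tuple with its complement and therefore acts as a product of $2^{k}-1$ disjoint transpositions; for $k\ge 3$ this number is odd, so $\tau\notin\alt{\Omega}$ and the $k+1$ partitions form a base for $\alt{\Omega}$.

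For $a=2^{k}-2$, hypothesis~\eqref{uglyyy} cannot succeed: regularity forces the four missing tuples $r_1,r_2,r_3,r_4\in\mathbb{F}_2^{k+1}$ to satisfy $r_1+r_2+r_3+r_4=0$, so any of the three nonzero pairwise sums $\chi_S = r_i+r_j$ gives a coordinate-flip $\sigma$ preserving the missing set and acting on $\Omega$ as a product of $2^{k}-2$ disjoint transpositions, which is an even number and hence lies in $\alt{\Omega}$. So I appeal instead to~\eqref{uglyyyY}: let $d=(0,\ldots,0)$ be the duplicated tuple, and take the five missing tuples to be $(1,\ldots,1)$ together with the characteristic vectors $\chi_{P_1},\chi_{P_2},\chi_{P_3},\chi_{P_4}$ of any partition of $\{1,\ldots,k+1\}$ into four nonempty parts (possible since $k+1\ge 4$). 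In each coordinate the missing set contains $(1,\ldots,1)$ and exactly one of the $\chi_{P_j}$, giving two $1$s and three $0$s; combined with the double preimage of $d$, this yields $(a,2)$-regular partitions.

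The stabilizer analysis is then short. By Remark~\ref{rempablopablo} (extended to~\eqref{uglyyyY}), any element of the pointwise stabilizer in $\sym{\Omega}$ of the partitions descends to $\bar g \in (\sym{\Delta})^{k+1}$ that must fix $d$ in order to preserve its multiplicity in the multiset $\theta(\Omega)$; since $d=(0,\ldots,0)$ and $\sym{\Delta}=\{1,(0\ 1)\}$, each $\bar g_i$ fixes $0$, forcing $\bar g=1$. Hence the full stabilizer in $\sym{\Omega}$ is $\{1,t\}$, where $t$ transposes the two preimages of $d$ and is odd, so the stabilizer in $\alt{\Omega}$ is trivial and $\bz'(a,2)\le k+1=\bz(a,2)-1$. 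The main obstacle is the parity obstruction that rules out hypothesis~\eqref{uglyyy} when $a=2^{k}-2$, which is what forces the duplicated-preimage construction.
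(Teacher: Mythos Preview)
Your proof is correct, and for $a=2^k-2$ it is essentially the paper's argument: the paper also duplicates $(0,\ldots,0)$ and removes five tuples, and in fact its specific choice $(1,0,\ldots,0)$, $(0,1,0,\ldots,0)$, $(0,0,1,0,\ldots,0)$, $(0,0,0,1,\ldots,1)$, $(1,\ldots,1)$ is exactly your construction with the partition $\{1\},\{2\},\{3\},\{4,\ldots,k+1\}$. Your impossibility argument for hypothesis~\eqref{uglyyy} in that case is a nice addition that the paper does not give; it explains why the duplicated construction is forced rather than merely convenient.

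For $a=2^k-1$ the two proofs genuinely diverge. The paper again uses hypothesis~\eqref{uglyyyY}: it duplicates $(0,\ldots,0)$ and removes three tuples, so the repeated element is fixed by any $\sigma\in(\sym{\Delta})^{k+1}$ preserving the multiset, forcing $\sigma=1$ immediately. You instead stay within hypothesis~\eqref{uglyyy}, taking $\Omega=\Delta^{k+1}\setminus\{(0,\ldots,0),(1,\ldots,1)\}$; the setwise stabilizer in $(\sym{\Delta})^{k+1}$ is then $\{1,\tau\}$ rather than trivial, and you dispose of $\tau$ by the parity observation that it acts on $\Omega$ as a product of $2^k-1$ transpositions, which is odd. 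Your route is more economical in that it avoids the duplicated point entirely, but it depends on the parity of $2^k-1$ and would not adapt to the analogous statement for other values of $b$; the paper's duplication trick is parity-free and works uniformly (which is why the paper uses it for both values of $a$).
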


\begin{proof}
By Corollary~\ref{cor:b=2alt} and Theorem~\ref{thrm:main}, we know that $$\bz'(a,2) \ge k+1=\bz(a,2)-1,$$ when $a \in \{2^k-1,2^k-2\}$. We will construct a base of size $k+1$ to complete the proof.

When $a=2^k-1$, we will choose $ab=2^{k+1}-2$ elements of $\Delta^{k+1}$ so that exactly $2^{k+1}-3$ of them are distinct, and the setwise stabiliser of these elements is trivial. We take two copies of $(0,\ldots,0)$, and omit any three elements that leave us with regular partitions -- for example, 
$$(1,
\underbrace{0,\ldots, 0}_{k \textrm{ times}}),\, (0,1,\underbrace{0,\ldots, 0}_{k-1\textrm{ times}}),\,(0,0,\underbrace{1,\ldots, 1}_{k-1\textrm{ times}})$$ would do. Let $\sigma \in (\sym{\Delta})^{k+1}$ fix our set. Since $(0,\ldots,0)$ is the only repeated element in our set, it must be fixed by $\sigma$, meaning $\sigma$ fixes $0$ in each coordinate. But this immediately implies that $\sigma$ is trivial.

Similarly, when $a=2^k-2$, we choose two copies of $(0,\ldots, 0)$ and omit any five elements of $\Delta^{k+1}$ that leave us with regular partitions. For example, 
$$
(1,\underbrace{0, \ldots, 0}_{k\textrm{ times}}),\, 
(0,1, \underbrace{0, \ldots, 0}_{k-1\textrm{ times}}),\,
(0,0,1,\underbrace{0,\ldots, 0}_{k-2\textrm{ times}}),\,
(0,0,0,\underbrace{1,\ldots, 1}_{k-2 \textrm{ times}}),\,
(\underbrace{1,\ldots, 1}_{k+1\textrm{ times}})$$ will do. Let $\sigma \in (\sym{\Delta})^{k+1}$ fix our set. Since $(0,\ldots,0)$ is the only repeated element in our set, it must be fixed by $\sigma$, meaning $\sigma$ fixes $0$ in each coordinate. But this immediately implies that $\sigma$ is trivial. We do require $k \ge 3$ in order for the elements we removed to exist and be distinct.
\end{proof}

\begin{proof}[Proof of 
Theorem~$\ref{thrm:main2}$ for $b=2$.]This follows immediately from Corollary~\ref{cor:b=2alt} and Lemma~\ref{lem:new}, when $a\ge 5$. The result for $a\le 4$ follows with a computation.
\end{proof}

\thebibliography{12}
\bibitem{BaC}R.~F.~Bailey, P.~J.~Cameron, Base size, metric dimension and other invariants of groups and graphs, \textit{Bull. Lond. Math. Soc.} \textbf{43} (2011) 209--242. 
%
%
\bibitem{BCN}C.~Benbenishty, J.~A.~Cohen, A.~C.~Niemeyer, The minimum length of a base for the symmetric group acting on partitions, \textit{European J. Combin.} \textbf{28} (2007), 1575--1581.

\bibitem{magma} W.~Bosma, J.~Cannon, C.~Playoust, The Magma algebra
system. I. The user language, \textit{J. Symbolic Comput.} \textbf{24}
(3-4) (1997), 235--265.

\bibitem{B1}T.~C.~Burness, Fixed point ratios in actions of finite classical groups. I, \textit{J. Algebra} \textbf{309} (2007), 69--79.

\bibitem{B2}T.~C.~Burness, Fixed point ratios in actions of finite classical groups. II, \textit{J. Algebra} \textbf{309} (2007), 80--138.

\bibitem{B3}T.~C.~Burness, Fixed point ratios in actions of finite classical groups. III, \textit{J. Algebra} \textbf{309} (2007), 693--748. 

\bibitem{B4}T.~C.~Burness, Fixed point ratios in actions of finite classical groups. IV, \textit{J. Algebra} \textbf{309} (2007), 749--788.

\bibitem{B5}T.~C.~Burness, On base sizes for actions of finite classical groups, \textit{J. Lond. Math. Soc. (2)} \textbf{75} (2007), 545--562. 

\bibitem{B6}T.~C.~Burness, M.~W.~Liebeck, A.~Shalev, Base sizes for simple groups and a conjecture of Cameron, \textit{Proc. Lond. Math. Soc. (3)} \textbf{98} (2009), 116--162. 

\bibitem{B7}T.~C.~Burness, R.~M.~Guralnick, J.~Saxl, On base sizes for symmetric groups, \textit{Bull. Lond. Math. Soc.} \textbf{43} (2011),  386--391.

\bibitem{B8}T.~C.~Burness, E.~A.~O'Brien, R.~A.~Wilson, Base sizes for sporadic simple groups, \textit{Israel J. Math.} \textbf{177} (2010), 307--333.

\bibitem{B9}T.~C.~Burness, M.~Garonzi, A.~Lucchini, On the minimal dimension of a finite simple group.
(With an appendix by T.C. Burness and R.M. Guralnick.)
\textit{J. Combin. Theory Ser. A} \textbf{171} (2020), 105175, 32 pp. 

\bibitem{B10}T.~Burness, M.~Garonzi, A.~Lucchini, Finite groups, minimal bases and the intersection number,  arXiv:2009.10137v1.

\bibitem{CGGM}J.~C\'aceres, D.~Garijo, A.~Gonz\'alez, A.~M\'arquez, M.~L.~Puertas, The determining number of Kneser graphs, \textit{Discrete Math. Theor. Comput. Sci.} \textbf{15} (2013), 1--14. 

\bibitem{cameron}P.~J.~Cameron, W.~M.~Kantor, Random permutations: some group-theoretic aspects, \textit{Combin. Probab. Comput.} \textbf{2} (1993) 257--262. 

\bibitem{cameron1}P.~J.~Cameron, \textit{Permutation groups}, London Mathematical Society Student Texts \textbf{45}, Cambridge University Press, Cambridge, 1999.
\bibitem{F1}J.~B.~Fawcett, The base size of a primitive diagonal group, \textit{J. Algebra} \textbf{375} (2013), 302--321.

\bibitem{halasi}Z.~Halasi, On the base size for the symmetric group acting on subsets, \textit{Studia Sci. Math. Hungar.} \textbf{49} (2012), 492--500.

\bibitem{james}J.~P.~James, Partition actions of symmetric groups and regular bipartite graphs, \textit{Bull. London Math. Soc.} \textbf{38} (2006), 224--232. 

\bibitem{jordan}C.~Jordan, \textit{Trait\'e des Substitutions et des \'Equations Alg\'ebriques}, Gauthier-Villars, Paris, 1870.

\bibitem{Praeger}C.~E.~Praeger, C.~Sneider, {\em Permutation Groups and Cartesian Decompositions}, London Mathematical Society Lecture Note Series 449, Cambridge University Press, 2018.

\end{document}